\newcommand{\real}{\mathbb{R}}
\newtheorem{theorem}{Theorem}[section]
\newtheorem{corollary}{Corollary}[section]
\newtheorem{proposition}{Proposition}[section]
\newtheorem{lemma}{Lemma}[section]
\theoremstyle{definition}
\newtheorem{example}{Example}[section]
\newtheorem{remark}{Remark}[section]
\newtheorem{conjecture}{Conjecture}[section]
\begin{document}

\begin{frontmatter}

\title{The role of the anti-regular graph in the\\ spectral analysis of threshold graphs}

\author{Cesar O. Aguilar}
\address{Department of Mathematics, State University of New York, Geneseo}

\author{Matthew Ficarra}
\address{Department of Mathematics, State University of New York, Geneseo}

\author{Natalie Schurman}
\address{Department of Mathematics, State University of New York, Geneseo}

\author{Brittany Sullivan}
\address{Department of Mathematics, State University of New York, Geneseo}

\begin{abstract}
The purpose of this paper is to highlight the role played by the anti-regular graph within the class of threshold graphs.  Using the fact that every threshold graph contains a maximal anti-regular graph, we show that some known results, and new ones, on the spectral properties of threshold graphs can be deduced from (i) the known results on the eigenvalues of anti-regular graphs, (ii) the subgraph structure of threshold graphs, and (iii) eigenvalue interlacing.  In particular, we prove a strengthened version of the recently proved fact that no threshold graph contains an eigenvalue in the interval $\Omega = [\frac{-1-\sqrt{2}}{2},\frac{-1+\sqrt{2}}{2}]$, except possibly the trivial eigenvalues $-1$ and/or $0$, determine the inertia of a threshold graph, and give partial results on a conjecture regarding the optimality of the non-trivial eigenvalues of an anti-regular graph within the class of threshold graphs.  
\end{abstract}

\begin{keyword}
threshold graph; anti-regular graph; adjacency matrix; eigenvalue interlacing\\
\MSC 05C50 \sep 15B05 \sep 05C75 \sep 15A18
\end{keyword}

\end{frontmatter}



\baselineskip 1.5em

\section{Introduction}
A simple graph $G=(V,E)$ is a \textit{threshold} graph if there exists a function $w:V\rightarrow [0,\infty)$ and a real number $t\geq 0$ called the \textit{threshold} such that for every $X\subset V$, $X$ is an independent set if and only if $\sum_{v\in X} w(v) \leq t$.  Threshold graphs were independently introduced in \cite{VC-PH:77} and \cite{PH-YZ:77}; for a comprehensive survey of threshold graphs see \cite{NM-UP:95}.  Threshold graphs have applications in resource allocation problems where the weight $w(v)$ is the amount of resources used by vertex $v$ and thus $X$ is an admissible subset of vertices if the total amount of resources required by $X$ is no more than the allowable threshold $t$.  

In this paper, we are interested in the eigenvalues of the $(0,1)$-adjacency matrix $A(G)$ of a threshold graph $G$.  To the best of the authors' knowledge, the first study on the spectral properties of threshold graphs was focused on a specific threshold graph called the \textit{anti-regular} graph \cite{EM:09}.  In \cite{EM:09}, several recurrence relations were obtained for the characteristic polynomial of the unique $n$-vertex connected anti-regular graph $A_n$, and moreover it was shown that $A_n$ has simple eigenvalues.  Specifically,  $A_n$ has $\lfloor\frac{n}{2}\rfloor$ negative and $\lfloor\frac{n}{2}\rfloor$ positive eigenvalues; when $n$ is odd $A_n$ has a zero eigenvalue and when $n$ is even $A_n$ has $-1$ as an eigenvalue.  Subsequently in \cite{IS-SF:11}, it was proved that the eigenvalues of $A_n$ other than $-1$ or $0$ are \textit{main eigenvalues}; recall that $\lambda$ is a main eigenvalue of $G$ if the eigenspace associated to $\lambda$ is not orthogonal to the all ones vector (see \cite{DC-PR-SS:10}).  In \cite{RBT:13}, the inertia of a general threshold graph was computed from the binary string uniquely associated to a threshold graph and moreover the inverse of the adjacency matrix of some threshold graphs were computed.  In \cite{DJ-VT-FT:13}, an algorithm is presented that constructs a diagonal matrix congruent to $A(G)+xI$; using the algorithm one can determine the number of eigenvalues of $G$ in any given interval.  Moreover, in \cite{DJ-VT-FT:13} the authors determine the threshold graph with smallest negative eigenvalue and show that all eigenvalues of a threshold graph are simple except possibly $-1$ and/or $0$.  In \cite{DJ-VT-FT:14}, the authors present an $O(n^2)$ algorithm for computing the characteristic polynomial of an $n$-vertex threshold graph and an improved algorithm running in almost linear time was constructed in \cite{MF:17}.  In \cite{DJ-VT-FT:15}, it is proved that no threshold graph has an eigenvalue in the interval $(-1,0)$ and a study of noncospectral equienergetic threshold graphs was undertaken.  In \cite{AB-RM:17}, the authors investigate the normalized adjacency eigenvalues and energy of threshold graphs and they obtain results that parallel the known results for the adjacency eigenvalues.  In \cite{CA-EP-JL-BS:18}, a nearly complete characterization of the eigenvalues of anti-regular graphs is given.  Specifically, it is proved that no anti-regular graph has an eigenvalue in the interval $\Omega = [\frac{-1-\sqrt{2}}{2},\frac{-1+\sqrt{2}}{2}]$ other than $-1$ or $0$, and moreover, the eigenvalues of $A_n$ come in negative-positive pairs in the sense that that the vertical line $x=-\frac{1}{2}$ is an approximate line of symmetry of the paired eigenvalues.  Furthermore, the set of all eigenvalues of all anti-regular graphs $A_n$ is dense in $(-\infty, \frac{-1-\sqrt{2}}{2}] \cup \{-1\} \cup [\frac{-1+\sqrt{2}}{2},\infty)$ when $n$ is even and dense in $(-\infty, \frac{-1-\sqrt{2}}{2}] \cup \{0\} \cup [\frac{-1+\sqrt{2}}{2},\infty)$ when $n$ is odd.  It was conjectured in \cite{CA-EP-JL-BS:18} that $\Omega$ is also an eigenvalue-free interval (except $-1$ and/or $0$) for \textit{all} threshold graphs.  It was also conjectured that among all threshold graphs on $n$ vertices, $A_n$ has the smallest positive eigenvalue and the largest eigenvalue less than $-1$.  In \cite{ZL-JW-QH:19}, the authors use the quotient graph associated to the degree partition (which is an \textit{equitable partition} \cite{CG-GR:01}) of a threshold graph to derive the known results on the inertia of a threshold graph and determine which threshold graphs have distinct eigenvalues.  Finally, in \cite{JL-OM-FT:19} the authors derive an explicit expression for the characteristic polynomial of a threshold graph and use it to find the determinant of $A(G)$ and prove that no two non-isomorphic graphs are cospectral.  Recently in \cite{EG:19}, a proof of the $\Omega$-conjecture was given using eigenvalue interlacing.

In this paper, we provide a more refined result on the conjecture in \cite{CA-EP-JL-BS:18} regarding the $\Omega$ interval, give partial results for the second conjecture and identify the critical cases where a more refined method is needed.    Perhaps more importantly, in this paper we demonstrate the distinguished role played by the anti-regular graph within the class of threshold graphs.  Specifically, we exploit the observation that every threshold graph contains a maximal anti-regular graph as an induced subgraph and conversely every threshold graph is an induced subgraph of a minimal anti-regular graph.  Using this observation we are able to give a new straightforward proof for the inertia of any threshold graph and obtain our main results regarding the conjectures in \cite{CA-EP-JL-BS:18}.  We also provide estimates for the maximum and minimum eigenvalue of a general threshold graph using easy to analyze induced threshold subgraphs contained in any threshold graph.

\section{Preliminaries}
Let $G=(V,E)$ be a simple graph with $(0,1)$-adjacency matrix $A=A(G)$.  Whenever we refer to the eigenvalues, eigenvectors, inertia, etc.\ of $G$ we mean those of $A$.  If $n$ is the order of $G$, we denote the eigenvalues of $G$ by $\lambda_1(G) \leq \lambda_2(G) \leq \cdots \leq \lambda_n(G)$ and we let $\mu^-(G)$ denote the largest eigenvalue of $G$ less than $-1$ (when such an eigenvalue exists) and $\mu^+(G)$ the smallest positive eigenvalue of $G$.  The \textit{inertia} of $A$ is the triple $i(A) = (i_{-}(A), i_0(A), i_+(A))$ where $i_-(A)$ is the number of negative, $i_+(A)$ is the number of positive, and $i_0(A)$ is the number of zero eigenvalues of $A$.  The following well-known eigenvalue interlacing theorem will be used throughout the paper.

\begin{theorem}[Eigenvalue Interlacing]\label{}
Let $G$ be an $n$-vertex graph and let $H$ be an $m$-vertex induced subgraph of $G$.  Then for $i\in \{1,\ldots,m\}$ we have
\[
\lambda_i(G) \leq \lambda_i(H) \leq \lambda_{n-m+i}(G).
\]
\end{theorem}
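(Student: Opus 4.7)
The plan is to prove this as a direct consequence of the Courant--Fischer variational characterization of the eigenvalues of a symmetric matrix. The key structural observation is that, because $H$ is an \emph{induced} subgraph of $G$, the adjacency matrix $A(H)$ appears as a principal $m \times m$ submatrix of $A(G)$, namely the submatrix obtained by selecting the rows and columns indexed by $V(H)$. Consequently, any vector $x \in \real^m$ has a natural zero-padded extension $\tilde x \in \real^n$ (with zeros on coordinates in $V(G) \setminus V(H)$) satisfying $\tilde x^T A(G) \tilde x = x^T A(H) x$ and $\tilde x^T \tilde x = x^T x$. Zero-padding is a linear injection, so it sends $k$-dimensional subspaces of $\real^m$ to $k$-dimensional subspaces of $\real^n$ while preserving every Rayleigh quotient.

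With the increasing-order convention used in the paper, the Courant--Fischer theorem furnishes two characterizations of the $i$-th eigenvalue of an $N \times N$ symmetric matrix $M$:
\[
\lambda_i(M) \;=\; \min_{\substack{S \subseteq \real^N \\ \dim S = i}} \; \max_{0 \neq x \in S} \frac{x^T M x}{x^T x} \;=\; \max_{\substack{T \subseteq \real^N \\ \dim T = N-i+1}} \; \min_{0 \neq x \in T} \frac{x^T M x}{x^T x}.
\]

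For the lower bound $\lambda_i(G) \le \lambda_i(H)$, I would apply the first (min-max) form. Let $S^\star \subseteq \real^m$ be an $i$-dimensional subspace attaining the minimum for $A(H)$, and let $\widetilde{S^\star} \subseteq \real^n$ be its zero-padded image, which is again $i$-dimensional. Since the Rayleigh quotient is preserved under padding, the max over $\widetilde{S^\star}$ with respect to $A(G)$ equals $\lambda_i(H)$, and taking the minimum over \emph{all} $i$-dimensional subspaces of $\real^n$ can only decrease this quantity. For the upper bound $\lambda_i(H) \le \lambda_{n-m+i}(G)$, I would instead use the second (max-min) form with subspace dimension $m - i + 1$, noting the arithmetic identity $m - i + 1 = n - (n-m+i) + 1$; the same zero-padding argument, now with min and max swapped, yields the desired inequality.

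There is no substantive obstacle here---this is essentially the classical Cauchy interlacing theorem restated for adjacency matrices---and the only bookkeeping is matching the subspace dimensions to the two different indices appearing on the two sides of the inequality. An alternative route is to induct on $n-m$ and reduce to the single-vertex-deletion case $m = n-1$, which itself follows from Courant--Fischer in one step; I would only pursue this route if a more elementary or self-contained write-up were desired.
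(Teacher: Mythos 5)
Your proof is correct. The paper states this eigenvalue interlacing theorem as a well-known result and gives no proof of its own, so there is nothing to compare against; your argument is the standard Courant--Fischer proof of the Cauchy interlacing theorem, and the two essential points --- that $A(H)$ is a principal submatrix of $A(G)$ precisely because $H$ is an \emph{induced} subgraph, and that zero-padding is a Rayleigh-quotient-preserving linear injection sending $k$-dimensional subspaces of $\real^m$ to $k$-dimensional subspaces of $\real^n$ --- are both correctly identified, with the index bookkeeping $m-i+1 = n-(n-m+i)+1$ handled properly.
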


Threshold graphs have several equivalent characterizations \cite{NM-UP:95}; the most illuminating is a recursive process, using the union and join graph operations, that can be encoded with a binary string.  Given a binary string $b=b_1b_2\cdots b_n$ with $b_1=0$, we let $G_1=(\{v_1\}, \emptyset)$ and then recursively define for $j=2,\ldots,n$ a graph $G_j$ obtained from $G_{j-1}$ by adding a new vertex $v_j$ and making $v_j$ a dominating vertex if $b_j=1$, or leaving $v_j$ as an isolated vertex if $b_j=0$.  After the $n$th step the resulting graph $G = G(b)$ is a threshold graph;  $G$ is clearly connected if and only if $b_n=1$.  We refer to the resulting labelled vertex set $V(G)=\{v_1,v_2,\ldots,v_n\}$ as the \textit{canonical labeling} of $G$.  

Let $G$ be a connected threshold graph with binary string $b=b_1b_2\cdots b_n$ and canonically labelled vertex set $V(G)$.  The string $b$ can written as $b=0^{s_1}1^{t_1}\ldots 0^{s_k}1^{t_k}$ where $0^{s_i}$ is short-hand for $s_i\geq 1$ consecutive zeros and $1^{t_i}$ is short-hand for $t_i\geq 1$ consecutive ones.  Since $n=\sum_{i=1}^k(s_i+t_i)$, it holds that $1\leq k \leq \lfloor\frac{n}{2}\rfloor$.  We can partition the vertex set as $V(G)=U_1 \cup V_1 \cup \cdots \cup U_k\cup V_k$ where the set $U_i$ consists of the $i$th group of consecutive isolated vertices in the construction of $G$ and thus $|U_i|=s_i$, and similarly, $V_i$ consists of the $i$th group of dominating vertices in the construction of $G$ and thus $|V_i|=t_i$.  If $s_1\geq 2$ then $\{U_1,V_1,\ldots,U_k,V_k\}$ is the degree partition of $G$ while if $s_1=1$ then the degree partition is $\{U_1\cup V_1, U_2, V_2,\ldots, U_k, V_k\}$.  In any case, each subset $U_i$ is an independent set and each subset $V_i$ is a clique.  Figure~\ref{fig:threshold-structure} illustrates the degree partition of a threshold graph; a line between $U_i$ and $V_j$ indicates that all vertices in $U_i$ are adjacent to all vertices in $V_j$, and the dashed rectangle indicates that $V_1\cup\cdots\cup V_k$ is a clique.  
\begin{figure}
\centering
\includegraphics[width=120mm,keepaspectratio,trim=0 0.5in 0 0.5in,clip]{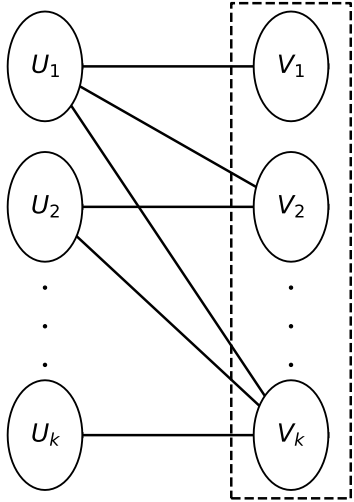}
\caption{Global structure of a threshold graph with binary string $b=0^{s_1}1^{t_1}\cdots 0^{s_k}1^{t_k}$; each vertex in $U_i$ is adjacent to $V_i \cup \cdots \cup V_k$, and $V_1\cup\cdots\cup V_k$ is a clique and $U_1\cup\cdots\cup U_k$ is an independent set.}\label{fig:threshold-structure}
\end{figure}  

The connected \textit{anti-regular} graph on $n$ vertices, denoted by $A_n$, is the unique connected graph whose degree sequence contains $(n-1)$ distinct entries \cite{MB-GC:67}.  The graph $A_n$ is a threshold graph with binary string $b=0101\cdots 01$ when $n$ is even and $b=00101\cdots 01$ when $n$ is odd.  It was proved in \cite{EM:09} (see also \cite{CA-EP-JL-BS:18}) that $A_n$ has simple eigenvalues and moreover has inertia $i(A_{2k}) = (k, 0, k)$ if $n=2k$ is even and $i(A_{2k+1}) = (k,1,k)$ if $n=2k+1$ is odd, and therefore $\lambda_{k+1}(A_{2k+1}) = 0$.  Moreover, it is known that $A_{2k+1}$ does not contain $-1$ as an eigenvalue and it is easy to prove that $A_{2k}$ has eigenvalue $\lambda_{k}(A_{2k}) = -1$.  Therefore,
\begin{align*}
\mu^-(A_{2k}) &= \lambda_{k-1}(A_{2k})\\
\mu^+(A_{2k}) &= \lambda_{k+1}(A_{2k}),
\end{align*}
and
\begin{align*}
\mu^-(A_{2k+1}) &= \lambda_k(A_{2k+1})\\ 
\mu^+(A_{2k+1}) &= \lambda_{k+2}(A_{2k+1}).
\end{align*}
The following was proved in \cite{CA-EP-JL-BS:18}.
\begin{lemma}[Parity Principle]\label{thm:parity-principle}
The sequences $\{\mu^-(A_{2k})\}_{k=2}^\infty$ and $\{\mu^-(A_{2k+1})\}_{k=1}^\infty$ are strictly increasing and both converge to $\frac{-1-\sqrt{2}}{2}$.  Similarly, $\{\mu^+(A_{2k})\}_{k=2}^\infty$ and $\{\mu^+(A_{2k+1})\}_{k=1}^\infty$ are strictly decreasing sequences and both converge to $\frac{-1+\sqrt{2}}{2}$.
\end{lemma}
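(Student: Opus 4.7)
The plan has two parts: establishing strict monotonicity of the four sequences, and identifying their common limits $\frac{-1\pm\sqrt{2}}{2}$.

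For monotonicity, the first step is the observation that $A_n$ embeds as an induced subgraph of $A_{n+2}$ preserving parity. Using the binary-string construction from Section~2, deleting the first two vertices $v_1, v_2$ from the canonical labelling of $A_{n+2}$ leaves an induced subgraph on $\{v_3, \ldots, v_{n+2}\}$ whose reconstructed binary string matches that of $A_n$, in both the even case (strings $0101\cdots 01$) and the odd case (strings $00101\cdots 01$). Applying Eigenvalue Interlacing to this embedding gives $\lambda_j(A_{n+2}) \leq \lambda_j(A_n) \leq \lambda_{j+2}(A_{n+2})$, which, combined with the known positions of the trivial eigenvalues $-1$ (for even $n$) and $0$ (for odd $n$), localizes $\mu^{\pm}(A_n)$ and $\mu^{\pm}(A_{n+2})$ in a common short interval adjacent to $-1$ or $0$. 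Since $\mu^\pm(A_n)$ and $\mu^\pm(A_{n+2})$ sit at different indices, interlacing alone leaves their order ambiguous, so I would supplement it with a three-term recurrence for the characteristic polynomials $p_n(x) = \det(xI - A(A_n))$, obtained by expanding the determinant along the two rows of $A(A_{n+2})$ corresponding to the newly added isolated-dominating pair. A sign analysis of $p_n$ and $p_{n+2}$ near $\mu^{\pm}(A_n)$, combined with the simplicity of the non-trivial eigenvalues of $A_n$ from \cite{EM:09}, upgrades the weak interlacing bounds to strict monotonicity of each of the four sequences.

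For the limits, I analyze the expected form $p_{n+2}(x) = q(x)\, p_n(x) - r(x)\, p_{n-2}(x)$ through its associated characteristic quadratic $z^2 - q(x) z + r(x) = 0$. The discriminant $q(x)^2 - 4 r(x)$ separates the $x$-values where $p_n(x)$ grows in an oscillatory regime from those where the growth is exponential, and the zeros of this discriminant are the accumulation points of the roots of $p_n$. I expect the discriminant to be proportional to $4x^2 + 4x - 1$, whose roots are precisely $x = \frac{-1 \pm \sqrt{2}}{2}$. Combined with the monotonicity and boundedness obtained in the first part, this forces $\{\mu^-(A_{2k})\}$ and $\{\mu^-(A_{2k+1})\}$ to converge to $\frac{-1 - \sqrt{2}}{2}$, and the $\mu^+$ sequences to converge to $\frac{-1 + \sqrt{2}}{2}$.

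The chief obstacle is identifying the limit values exactly: the induced subgraph structure together with interlacing is a qualitative tool, but the specific irrational constants $\frac{-1\pm\sqrt{2}}{2}$ must emerge from an explicit algebraic computation. Producing the recurrence in a workable form and verifying that its discriminant factors through $4x^2 + 4x - 1$ requires careful bookkeeping of the block structure in $A(A_n)$ induced by the degree partition. I expect the computation to be short once the block decomposition is laid out, but it is the non-routine ingredient that pins down the limit rather than merely its existence.
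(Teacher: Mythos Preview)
The paper does not supply its own proof of this lemma: the sentence immediately preceding it reads ``The following was proved in \cite{CA-EP-JL-BS:18},'' and no argument is given in the present text. Consequently there is no in-paper proof to compare your proposal against; the result is imported wholesale from the cited reference.

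That said, your outline is a faithful reconstruction of the type of argument one finds in that literature. The embedding $A_n\hookrightarrow A_{n+2}$ by deleting $v_1,v_2$ is correct in both parities (in the odd case the two deleted $0$'s leave the remaining string beginning $1010\cdots$, which re-encodes as $00101\cdots$ of length $n$), and you are right that interlacing alone cannot order $\mu^-(A_{2k})=\lambda_{k-1}(A_{2k})$ against $\mu^-(A_{2k+2})=\lambda_k(A_{2k+2})$, since these sit at different indices. A recurrence for the characteristic polynomials is indeed the standard remedy; the paper itself notes that such recurrences were derived in \cite{EM:09}, and \cite{CA-EP-JL-BS:18} exploits them to obtain both the strict monotonicity and the exact limits $\tfrac{-1\pm\sqrt{2}}{2}$ via an analysis equivalent to your discriminant argument.

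One caution on execution: expanding the determinant of $A(A_{n+2})$ naively along the rows of the two new vertices will not immediately produce a clean three-term recurrence in $n$, because the new dominating vertex is adjacent to \emph{all} of $v_1,\ldots,v_{n+1}$, not just to a bounded set. The workable recurrences in \cite{EM:09} and \cite{CA-EP-JL-BS:18} are obtained either via the tridiagonal quotient matrix associated to the degree partition or via a divided-difference manipulation; you should expect to pass through one of those reductions rather than a direct Laplace expansion. Once the recurrence is in hand, your discriminant heuristic is exactly right: the transition polynomial is $4x^2+4x-1$, and its roots are the asserted limits.
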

It is important to note that Lemma~\ref{thm:parity-principle} does not say that $\{\mu^-(A_n)\}_{n=3}^\infty$ is strictly increasing nor that $\{\mu^+(A_n)\}_{n=3}^\infty$ is strictly decreasing.  In fact, the opposite is true depending on whether $n$ is even or odd.  To see this, we first note that $A_n$ is an induced subgraph of $A_{n+1}$ (see Section~\ref{sec:subgraphs}).  Hence, by eigenvalue interlacing it holds that
\[
\mu^-(A_{2k}) = \lambda_{k-1}(A_{2k}) \leq \lambda_{k}(A_{2k+1}) = \mu^-(A_{2k+1})
\]
while on the other hand
\[
\mu^-(A_{2k+2}) = \lambda_k(A_{2k+2})\leq \lambda_{k}(A_{2k+1}) = \mu^-(A_{2k+1}).
\]
Similarly, 
\[
\mu^+(A_{2k}) = \lambda_{k+1}(A_{2k}) \leq \lambda_{k+2}(A_{2k+1}) = \mu^+(A_{2k+1})
\]
while on the other hand
\[
\mu^+(A_{2k+2}) = \lambda_{k+2}(A_{2k+2}) \leq \lambda_{k+2}(A_{2k+1}) = \mu^+(A_{2k+1}).
\]
We summarize with the following.
\begin{proposition}\label{prop:odd-even}
If $n\geq 3$ is odd, then 
\begin{align*}
\mu^-(A_{n+1}) &\leq \mu^-(A_n)\\
\mu^+(A_{n+1}) &\leq \mu^+(A_n).
\end{align*}
\end{proposition}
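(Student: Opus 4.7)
The plan is to recognize that the proposition is essentially a packaging of the interlacing computations carried out in the paragraph immediately preceding it, so the proof will simply invoke those bounds under the substitution $n = 2k+1$. The key inputs I would assume are (i) the parity-dependent identifications $\mu^-(A_{2k}) = \lambda_{k-1}(A_{2k})$, $\mu^-(A_{2k+1}) = \lambda_k(A_{2k+1})$, $\mu^+(A_{2k}) = \lambda_{k+1}(A_{2k})$, $\mu^+(A_{2k+1}) = \lambda_{k+2}(A_{2k+1})$, which are set up in the preliminaries; and (ii) the fact (forward-cited to Section~\ref{sec:subgraphs}) that $A_n$ is an induced subgraph of $A_{n+1}$.

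The first step is to write $n = 2k+1$ with $k \geq 1$, so that $n+1 = 2k+2$ is even, and the relevant edge-case conventions (existence of $\mu^-(A_{n+1})$ requires $k \geq 2$; for $k=1$ one checks $A_2$ and $A_3$ directly) need only be noted in passing. Then I would apply the Eigenvalue Interlacing Theorem with $G = A_{2k+2}$ (order $2k+2$) and induced subgraph $H = A_{2k+1}$ (order $m = 2k+1$), yielding $\lambda_i(A_{2k+2}) \leq \lambda_i(A_{2k+1})$ for every $i \in \{1,\ldots,2k+1\}$.

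Specializing to $i = k$ gives $\mu^-(A_{n+1}) = \lambda_k(A_{2k+2}) \leq \lambda_k(A_{2k+1}) = \mu^-(A_n)$, and specializing to $i = k+2$ gives $\mu^+(A_{n+1}) = \lambda_{k+2}(A_{2k+2}) \leq \lambda_{k+2}(A_{2k+1}) = \mu^+(A_n)$. These are precisely the two claimed inequalities.

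There is no real obstacle in this argument; the only subtle point is to ensure that the indices $k$ and $k+2$ land in the admissible range for the interlacing bound (i.e.\ between $1$ and $m = 2k+1$), which is immediate for $k \geq 1$ in the $\mu^+$ case and for $k \geq 2$ in the $\mu^-$ case. Thus the bulk of the work has already been done in the motivating paragraph, and the proposition follows by simply restating the two relevant interlacing inequalities under the odd-$n$ reindexing.
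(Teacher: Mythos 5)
Your proposal is correct and follows essentially the same route as the paper: the authors likewise note that $A_{2k+1}$ is an induced subgraph of $A_{2k+2}$ and apply the interlacing inequality $\lambda_i(A_{2k+2}) \leq \lambda_i(A_{2k+1})$ at $i=k$ and $i=k+2$, using the same parity-dependent identifications of $\mu^{\pm}$. The only cosmetic difference is that the paper presents these inequalities in the paragraph preceding the proposition rather than as a formal proof.
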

\noindent We note that Proposition~\ref{prop:odd-even} implies that if $n\geq 4$ is even, then
\begin{align*}
\mu^-(A_n) &\leq \mu^-(A_{n+1})\\
\mu^+(A_n) &\leq \mu^+(A_{n+1}).  
\end{align*}

\section{Subgraphs in threshold graphs}\label{sec:subgraphs}
It is straightforward to show that \textit{any} induced subgraph of a threshold graph is again a threshold graph.  In fact, suppose that $b'=b_1'b_2'\cdots b_m'$ is a \textit{substring} of the string $b=b_1b_2\cdots b_n$ with $b_1=b_1'=0$, that is, there exist positive integers $n_2 < \cdots < n_m$ such that $b'_j = b_{n_j}$ for $j=2,\ldots,m$, with $n_2\geq 2$.  Then the threshold graph $G(b')$ is isomorphic to the subgraph of $G(b)$ induced by the vertices $\{v_{1},v_{n_2},\ldots,v_{n_m}\}$, where as usual $G(b)$ has a canonically labelled vertex set.  We summarize this observation with the following.
\begin{proposition}\label{thm:subgraph-threshold}
Let $b'=b_1'b_2'\cdots b_m'$ be a substring of $b=b_1b_2\cdots b_n$ with $b_1=b_1'=0$.  Then $G(b')$ is isomorphic to an induced subgraph of $G(b)$.  Conversely, every induced subgraph of $G(b)$ is of the form $G(b')$.  
\end{proposition}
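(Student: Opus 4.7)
The plan is to reduce both halves of the proposition to a single adjacency rule that captures how the binary string determines the edges: for indices $1 \leq i < j \leq n$, the vertices $v_i$ and $v_j$ are adjacent in $G(b)$ if and only if $b_j = 1$. I would establish this rule by induction on $j$ directly from the recursive construction, since at step $j$ the new vertex $v_j$ is joined to every previously added vertex when $b_j=1$ and to none when $b_j=0$, and no later step alters an already-existing edge. Both directions of the proposition will then follow by comparing this rule in $G(b)$ against the analogous rule in $G(b')$.

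For the forward direction, I would fix the indices $1 = n_1 < n_2 < \cdots < n_m$ with $b'_j = b_{n_j}$ and define $\phi \colon V(G(b')) \to \{v_{n_1}, v_{n_2}, \ldots, v_{n_m}\}$ by $\phi(v'_j) = v_{n_j}$, where $v'_1, \ldots, v'_m$ denotes the canonical labeling of $G(b')$. For any $i < j$, the adjacency rule in $G(b')$ gives $v'_i \sim v'_j$ if and only if $b'_j = 1$, while in $G(b)$ it gives $v_{n_i} \sim v_{n_j}$ if and only if $b_{n_j} = 1$. Since $b'_j = b_{n_j}$ by construction, the map $\phi$ is a graph isomorphism from $G(b')$ onto the subgraph of $G(b)$ induced by $\{v_{n_1}, \ldots, v_{n_m}\}$.

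For the converse, let $H$ be the subgraph of $G(b)$ induced by $\{v_{m_1}, \ldots, v_{m_k}\}$ with $m_1 < \cdots < m_k$. I would define $b'$ by setting $b'_1 = 0$ and $b'_j = b_{m_j}$ for $2 \leq j \leq k$, then invoke the forward-direction argument to conclude $G(b') \cong H$. The only subtlety is the possibility that $b_{m_1} = 1$, in which case $v_{m_1}$ is a dominating vertex of $G(b)$; however, since every other index $m_2, \ldots, m_k$ exceeds $m_1$, the value of $b_{m_1}$ governs only edges from $v_{m_1}$ to earlier vertices that are not in $H$, so overwriting it with $0$ does not change any adjacency within $H$. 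This first-bit adjustment is the only point requiring any care; everything else is a direct bookkeeping of the adjacency rule, so I do not expect a serious obstacle.
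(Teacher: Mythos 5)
Your proof is correct and matches the paper's treatment: the paper offers no formal proof of this proposition, only the preceding paragraph asserting that $G(b')$ is the subgraph induced by $\{v_1, v_{n_2},\ldots,v_{n_m}\}$, which is exactly the embedding you construct. Your adjacency rule ($v_i\sim v_j$ for $i<j$ iff $b_j=1$) and the first-bit adjustment in the converse simply supply the routine details the authors deem ``straightforward.''
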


In \cite{RM:03}, R. Merris proved that anti-regular graphs are \textit{universal} for trees, that is, every tree on $n$ vertices is isomorphic to a subgraph of $A_n$.  We show that a similar universality property of the anti-regular graph holds for the class of threshold graphs.
\begin{theorem}[Smallest Anti-regular Supergraph]\label{thm:antiregular-supergraph}
Every connected threshold graph on $n\geq 2$ vertices is isomorphic to an induced subgraph of the anti-regular graph $A_{2n-2}$.  In fact, let $G$ be a connected threshold graph with binary string $b=0^{s_1}1^{t_1}\cdots 0^{s_k}1^{t_k}$ and with $n=\sum_{i=1}^k (s_i+t_i)$ vertices.  Let $N=2(n-k)$ if $s_1=1$ and let $N=2(n-k)-1$ if $s_1\geq 2$.  Then $G$ is an induced subgraph of the anti-regular graph $A_N$.  Moreover, $A_N$ is the smallest anti-regular graph containing $G$ as an induced subgraph.
\end{theorem}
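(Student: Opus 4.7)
Proof proposal. By Proposition~\ref{thm:subgraph-threshold}, $G = G(b)$ is isomorphic to an induced subgraph of $A_M$ if and only if $b$ appears as a substring of the binary string of $A_M$; the plan is therefore to determine the smallest such $M$ and show it equals $N$. The key combinatorial observation is that the binary string of $A_M$ is $(01)^m$ when $M = 2m$ and $0(01)^m$ when $M = 2m+1$, so any two adjacent positions of this string carry distinct characters \emph{with the single exception} that, when $M$ is odd, positions $1$ and $2$ are both $0$.

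For the upper bound I would explicitly construct the embedding. Let $\ell_j$ denote the size of the $j$-th block of $b$, so $\ell_{2i-1} = s_i$, $\ell_{2i} = t_i$, and $\sum_{j=1}^{2k}\ell_j = n$. Reading $b$ from left to right, greedily select positions in the binary string of $A_N$ by advancing $2$ within each run of identical characters of $b$ and $1$ at each block boundary. When $s_1 = 1$, take $A_N = A_{2(n-k)}$ and start at position $1$; unrolling, the final matched position is $\sum_{j=1}^{2k}(2\ell_j - 1) = 2n - 2k = N$, which is even and hence a $1$-position matching $b_n = 1$. When $s_1 \geq 2$, take $A_N = A_{2(n-k)-1}$ and exploit the exceptional pair of zeros at positions $1, 2$ to match the first two zeros of $b$, saving one position relative to the even construction; the final matched position is then $2(n-k) - 1 = N$, which is odd and at least $3$, hence again a $1$-position.

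For the lower bound, suppose $b$ embeds as a substring of the binary string of $A_M$ at positions $1 = p_1 < p_2 < \cdots < p_n \leq M$. The string $b$ has $n - 2k$ same-character transitions and $2k - 1$ different-character transitions. By the key observation, a same-character transition forces $p_{i+1} - p_i \geq 2$, with the single exception $(p_1, p_2) = (1, 2)$ available only when $M$ is odd and $s_1 \geq 2$; a different-character transition forces $p_{i+1} - p_i \geq 1$. Summing,
\[
p_n \;\geq\; 1 + 2(n - 2k) + (2k - 1) - \epsilon \;=\; 2(n-k) - \epsilon,
\]
where $\epsilon \in \{0, 1\}$ and $\epsilon = 1$ only when $M$ is odd and $s_1 \geq 2$. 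Since $b_n = 1$, $p_n$ must lie at a $1$-position, whose parity is determined by that of $M$. A short case analysis on the parity of $M$ together with the value of $s_1$ then pins down the minimum admissible $M$ as exactly $N$. The main technical obstacle will be the careful bookkeeping in these four sub-cases (even/odd $M$ versus $s_1 = 1$ or $\geq 2$); once the combinatorial observation about adjacent positions of $b(A_M)$ is isolated, the remaining verifications are routine.
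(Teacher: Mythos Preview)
Your proposal is correct and follows essentially the same idea as the paper's proof: both compute the length of the shortest alternating (or almost-alternating) binary string containing $b$ as a substring, which amounts to counting how many extra characters must sit between consecutive equal characters of $b$. The paper phrases this as inserting $(s_i-1)$ ones into each block $0^{s_i}$ and $(t_i-1)$ zeros into each block $1^{t_i}$, giving $N = n + \sum(s_i-1) + \sum(t_i-1) = 2(n-k)$, and then observes the saving of one when $s_1\geq 2$; you phrase the same count via the greedy embedding and the transition inequalities $p_{i+1}-p_i \geq 2$ or $\geq 1$.

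The one substantive difference is that the paper simply asserts minimality (``the shortest alternating string \ldots\ can be obtained by inserting \ldots''), whereas you actually supply a lower-bound argument. Your summation $p_n \geq 1 + 2(n-2k) + (2k-1) - \epsilon = 2(n-k)-\epsilon$ is exactly what is needed, and the subsequent case split is indeed routine: for $s_1=1$ the exception is unavailable so $\epsilon=0$ and $M\geq p_n \geq 2(n-k)=N$; for $s_1\geq 2$, either $M$ is even (forcing $\epsilon=0$, hence $M\geq 2(n-k)>N$) or $M$ is odd (allowing $\epsilon=1$, hence $M\geq 2(n-k)-1=N$). You do not in fact need the parity of $p_n$ to close these cases, so that part of your sketch can be dropped. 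In short: same approach, but your version fills in the minimality justification that the paper leaves tacit.
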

\begin{proof}
The shortest alternating string $\overline{b}=\overline{b}_1\overline{b}_2\cdots\overline{b}_N=0101\cdots 01$ that contains $b=0^{s_1}1^{t_1}\cdots 0^{s_k}1^{t_k}$ as a substring can be obtained by inserting $(t_i-1)$ zeros in between the consecutive $t_i$ ones and inserting $(s_i-1)$ ones in between the consecutive $s_i$ zeros appearing in $b$, for $i=1,\ldots,k$.  Therefore, $$N= n + \sum_{i=1}^k (s_i-1) + \sum_{i=1}^k (t_i-1) = 2n - 2k.$$  Since the range of $k$ is $1\leq k \leq \lfloor\tfrac{n}{2}\rfloor$, we have $N\leq 2n-2$.  Hence, $G$ is an induced subgraph of $A_N$ and thus also of $A_{2n-2}$.  If $s_1=1$, then we obtain exactly $N=2(n-k)$ but if $s_1\geq 2$, then we may embed $G$ in the smaller anti-regular graph $A_{2(n-k)-1}$.  
\end{proof}
Conversely, we may be interested in the largest anti-regular graph contained in a threshold graph.

\begin{theorem}[Largest Anti-regular Subgraph]\label{thm:antiregular-subgraph}
Let $G$ be a connected threshold graph with binary string $b=0^{s_1}1^{t_1}\cdots 0^{s_k}1^{t_k}$.  Let $m=2k$ if $s_1=1$ and let $m=2k+1$ if $s_1\geq 2$.  Then $A_m$ is an induced subgraph of $G$.  In either case, $A_m$ is the largest anti-regular graph contained in $G$ as an induced subgraph.
\end{theorem}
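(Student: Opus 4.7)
The plan is to reduce the statement to a combinatorial one on binary strings via Proposition~\ref{thm:subgraph-threshold}: the induced subgraphs of $G=G(b)$ correspond exactly to substrings of $b$ that begin with $0$. Recall that the binary string of $A_M$ is $0101\cdots 01$ of length $M$ when $M$ is even and $00101\cdots 01$ of length $M$ when $M$ is odd. So the task becomes: what is the longest substring of $b=0^{s_1}1^{t_1}\cdots 0^{s_k}1^{t_k}$ (starting with $0$) that has one of these two shapes?

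To realize $A_m$ as an induced subgraph, I would exhibit its binary string as an explicit substring of $b$ by selecting carefully from each block. When $s_1=1$, pick one $0$ from each block $0^{s_i}$ and one $1$ from each block $1^{t_i}$; the resulting substring $0101\cdots 01$ has length $2k$ and is exactly the string of $A_{2k}$. When $s_1\geq 2$, pick two $0$'s from $0^{s_1}$ (available since $s_1\geq 2$) together with one character from each of the remaining $2k-1$ blocks; the resulting substring $00101\cdots 01$ has length $2k+1$ and is exactly the string of $A_{2k+1}$.

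For maximality I would argue by counting blocks. Since the blocks of $b$ are maximal runs of identical characters, any alternating run of length $\ell$ inside a substring of $b$ must consume $\ell$ distinct blocks of $b$ in the correct interleaved order. So if $A_M$ is an induced subgraph with $M=2j$, the substring is a single alternating run using $j$ zero-blocks and $j$ one-blocks of $b$, forcing $j\leq k$ and $M\leq 2k$. If $M=2j+1$, I would case-split on the location of the two leading $0$'s: if they lie in distinct zero-blocks $0^{s_p}$ and $0^{s_q}$ with $p<q$, then the trailing alternating tail $101\cdots 01$ of length $2j-1$ must be drawn from $1^{t_q},0^{s_{q+1}},\ldots,1^{t_k}$, requiring $j\leq k-q+1\leq k-1$, hence $M\leq 2k-1$; if instead they lie in the same zero-block $0^{s_p}$ (which requires $s_p\geq 2$), the tail is drawn from $1^{t_p},0^{s_{p+1}},\ldots,1^{t_k}$, giving $j\leq k-p+1$, so $M\leq 2k+1$ with equality forcing $p=1$ and $s_1\geq 2$.

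Putting the cases together yields $M\leq 2k$ whenever $s_1=1$ and $M\leq 2k+1$ whenever $s_1\geq 2$, with both bounds attained by the explicit embeddings constructed above. I expect the main technicality to be the careful bookkeeping of the $M=2j+1$ case, in particular verifying that splitting the leading double $0$ across two distinct zero-blocks of $b$ always costs at least two block-slots from the trailing alternating tail, so it can never outperform the pure alternating embedding of length $2k$; merging the double $0$ into $0^{s_1}$ is the \emph{only} way to exceed $2k$, and it is available exactly when $s_1\geq 2$.
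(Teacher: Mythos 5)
Your proposal is correct and follows essentially the same route as the paper: reduce to substrings via Proposition~\ref{thm:subgraph-threshold}, exhibit the alternating string $0101\cdots01$ (or $00101\cdots01$ when $s_1\geq 2$) as a substring by picking one character per block (two from $0^{s_1}$), and then argue that no longer anti-regular string can occur. The only difference is that the paper simply asserts that these are the longest such substrings, whereas you supply the block-counting argument for maximality explicitly; your case analysis for odd $M$ is sound and fills in what the paper leaves to the reader.
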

\begin{proof}
If $s_1=1$, then $\tilde{b}=0101\ldots 01\in\{0,1\}^{2k}$ is the longest alternating substring of $b$ and thus $A_{2k}$ is isomorphic to an induced subgraph of $G$.  If $s_1 \geq 2$, then $A_{2k}$ is also an induced subgraph of $G$, however the longer binary string $\tilde{b}=00101\cdots 01\in\{0,1\}^{2k+1}$ is also a substring of $b$ and thus $A_{2k+1}$ is isomorphic to an induced subgraph of $G$.  
\end{proof}
We note that part of Theorem~\ref{thm:antiregular-subgraph} is stated as Corollary 4.4 in \cite{IS-SF:11}.
\begin{remark}
The anti-regular graph $A_m$ in Theorem~\ref{thm:antiregular-subgraph} is the underlying graph of the quotient graph of $G$ associated to the degree partition of $G$.
\end{remark}

\section{Applications in the spectral analysis of threshold graphs}\label{sec:main-results}
In this section we show how the Parity Principle and Theorems~\ref{thm:antiregular-supergraph}-\ref{thm:antiregular-subgraph} can be used in the spectral analysis of general threshold graphs.  For a matrix $A$ we denote the algebraic multiplicity of an eigenvalue $\lambda$ of $A$ by $m_\lambda(A)$.  

For any vertex $u$ in $G$, let $N(u)$ denote the vertices adjacent to $u$.  We say that $v_i$ and $v_j$ are \textit{duplicate} vertices if $N(v_i) = N(v_j)$ and \textit{co-duplicate} vertices if $v_i$ and $v_j$ are adjacent and $N(v_i)\backslash\{v_j\} = N(v_j)\backslash\{v_i\}$.  It is straightforward to show that if $v_i$ and $v_j$ are duplicate or co-duplicate vertices, then $\lambda=0$ or $\lambda=-1$, respectively, is an eigenvalue of $G$ with eigenvector $x\in\real^n$ such that $x_i = - x_j$ and all other entries of $x$ are zero.  It follows that if $X\subset V(G)$ is a subset of mutually duplicate or co-duplicate vertices, then $m_0(G)$ or $m_{-1}(G)$, respectively, is at least $|X|-1$.  Thus, given a connected threshold graph $G$ with binary string $b=0^{s_1}1^{t_1}\cdots 0^{s_k}1^{t_k}$, it holds that $m_{-1}(G) \geq \sum_{i=1}^k (t_i-1)$ and $m_0(G)\geq \sum_{i=1}^k (s_i-1)$ if $s_1\geq 2$, while $m_{-1}(G) \geq t_1 + \sum_{i=2}^k (t_i-1)$ and $m_{0}(G) \geq \sum_{i=2}^k (s_i-1)$ if $s_1=1$.  For these reasons, for a threshold graph $G$ with eigenvalue $\lambda$, we say that $\lambda$ is a \textit{non-trivial} eigenvalue if $\lambda \notin \{-1,0\}$.  

\begin{theorem}(Anti-regular Interlacing)\label{thm:antiregular-interlacing}
Let $G$ be a connected threshold graph with binary string $b=0^{s_1}1^{t_1}\cdots 0^{s_k}1^{t_k}$.  Let $s=\sum_{i=1}^k s_i$ and let $t=\sum_{i=1}^k t_i$, and let $n=s+t=|G|$.
\begin{enumerate}[(i)]
\item If $s_1\geq 2$, then
\[
\lambda_i(G) \leq \lambda_i(A_{2k+1}) < -1, \; \textup{ for } i=1,\ldots, k
\]
and
\[
0<\lambda_{k+1+i}(A_{2k+1}) \leq \lambda_{n-k+i}(G), \;\textup{ for } i=1,\ldots, k.
\]
Consequently, $m_{-1}(G) = t-k$ and $m_{0}(G) = s-k$, and $G$ has $k$ non-trivial negative and $k$ non-trivial positive eigenvalues.

\item If $s_1=1$, then
\[
\lambda_i(G) \leq \lambda_i(A_{2k})<-1, \; \textup{ for } i=1,\ldots, k-1
\]
and
\[
0<\lambda_{k+i}(A_{2k}) \leq \lambda_{n-k+i}(G),\; \textup{ for } i=1,\ldots, k.
\]
Consequently, $m_{-1}(G) = t-k+1$ and $m_0(G)=s-k$, and $G$ has $(k-1)$ non-trivial negative and $k$ non-trivial positive eigenvalues.
\end{enumerate}
In either case, $G$ has inertia $i(G)=(t, s-k, k)$.
\end{theorem}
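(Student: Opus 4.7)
The plan is to deduce the theorem by combining two ingredients already set up in the excerpt: (a) Theorem~\ref{thm:antiregular-subgraph}, which gives $A_m$ (with $m=2k+1$ or $m=2k$) as an induced subgraph of $G$, together with the known inertia and eigenvalue structure of $A_m$; and (b) the explicit lower bounds $m_{-1}(G)\geq t-k$, $m_0(G)\geq s-k$ (for $s_1\geq 2$) and $m_{-1}(G)\geq t-k+1$, $m_0(G)\geq s-k$ (for $s_1=1$) coming from duplicate and co-duplicate vertex pairs.  The heart of the argument is that these two sources of eigenvalues, once added up, already exhaust all $n$ eigenvalues, so every inequality that enters the count must be tight, and this will pin down both the multiplicities and the inertia.

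First I would handle the interlacing inequalities.  In case (i), $A_{2k+1}$ is an induced subgraph of $G$ with $|G|=n$, so the Interlacing Theorem gives
\[
\lambda_i(G) \leq \lambda_i(A_{2k+1}) \leq \lambda_{n-(2k+1)+i}(G), \qquad i=1,\dots,2k+1.
\]
Since $A_{2k+1}$ has inertia $(k,1,k)$ and $-1$ is not an eigenvalue of $A_{2k+1}$, its first $k$ eigenvalues are strictly less than $-1$ and its last $k$ are strictly positive.  The left half of the interlacing, taken for $i=1,\dots,k$, gives $\lambda_i(G)\leq \lambda_i(A_{2k+1})<-1$.  The right half, taken for $i=k+2,\dots,2k+1$ and reindexed as $j=i-k-1$, yields $0<\lambda_{k+1+j}(A_{2k+1})\leq \lambda_{n-k+j}(G)$ for $j=1,\dots,k$.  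Case (ii) is completely analogous using $A_{2k}$, whose inertia is $(k,0,k)$ with $\lambda_k(A_{2k})=-1$, so the $k-1$ leftmost and $k$ rightmost eigenvalues of $A_{2k}$ give the two claimed chains.

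Next, I would combine the interlacing output with the trivial-eigenvalue multiplicity bounds.  In case (i) we have produced $k$ eigenvalues of $G$ strictly less than $-1$, $k$ eigenvalues strictly greater than $0$, at least $t-k$ eigenvalues equal to $-1$, and at least $s-k$ eigenvalues equal to $0$; since
\[
k+k+(t-k)+(s-k)=s+t=n,
\]
all four of these counts are in fact equalities.  This simultaneously forces $m_{-1}(G)=t-k$, $m_0(G)=s-k$, and shows $G$ has exactly $k$ non-trivial eigenvalues on each side; summing then gives $i(G)=(k+(t-k),\,s-k,\,k)=(t,s-k,k)$.  Case (ii) runs identically after replacing $k$ with $k-1$ in the negative interlaced count and $t-k$ with $t-k+1$ in the $-1$-multiplicity bound; the same arithmetic $(k-1)+k+(t-k+1)+(s-k)=n$ closes the argument and yields the identical inertia formula.

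There is no real obstacle; the only point that requires a small sanity check is that, in case (ii), $-1$ is genuinely counted in the interlacing chain stemming from $A_{2k}$ (because $\lambda_k(A_{2k})=-1$), which is why the strict inequality $\lambda_i(A_{2k})<-1$ is stated only up to $i=k-1$.  Once that bookkeeping is observed, the proof is a clean three-line computation in each parity case: invoke Theorem~\ref{thm:antiregular-subgraph}, apply eigenvalue interlacing with the known inertia of $A_m$, and use the pigeonhole-style total count $n$ to convert the a priori inequalities into equalities.
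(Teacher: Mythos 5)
Your proposal is correct and follows essentially the same route as the paper's proof: invoke Theorem~\ref{thm:antiregular-subgraph} to obtain $A_{2k+1}$ (resp.\ $A_{2k}$) as an induced subgraph, apply eigenvalue interlacing using the known inertia of the anti-regular graph, and combine with the duplicate/co-duplicate multiplicity bounds via the count $k+k+(t-k)+(s-k)=n$ to force all inequalities to be equalities. In fact your write-up supplies the bookkeeping details (the reindexing of the interlacing chain and the explicit pigeonhole sum) that the paper's proof only sketches.
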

\begin{proof}
The proof is a consequence of Theorem~\ref{thm:antiregular-subgraph}, the eigenvalue interlacing theorem, and the remarks preceding the theorem statement.  For instance, if $s_1\geq 2$, then Theorem~\ref{thm:antiregular-subgraph} implies that $A_{2k+1}$ is an induced subgraph of $G$.  Then the inequalities in (i) hold by the interlacing theorem.  Now, since $m_{-1}(G) \geq t-k$ and $m_{0}(G)\geq s-k$, then the inequalities in (i) imply that in fact $m_{-1}(G) = t-k$ and $m_0(G) = s-k$, and thus $G$ has $k$ non-trivial negative and $k$ non-trivial positive eigenvalues.  The case $s_1=1$ is similar and is omitted.
\end{proof}
Recall that $\mu^-(G)$ denotes the largest eigenvalue of $G$ less than $-1$ and $\mu^+(G)$ denotes the smallest positive eigenvalue of $G$.  A direct consequence of Theorem~\ref{thm:antiregular-interlacing} is the following.
\begin{corollary}\label{cor:omega-refined}
Let $G$ be a connected threshold graph with binary string $b=0^{s_1}1^{t_1}\cdots 0^{s_k}1^{t_k}$.  
\begin{enumerate}[(i)]
\item If $s_1\geq 2$, then $G$ does not contain non-trivial eigenvalues in the interval $$[\mu^-(A_{2k+1}), \mu^+(A_{2k+1})].$$
\item If $s_1=1$, then $G$ does not contain non-trivial eigenvalues in the interval $$[\mu^-(A_{2k}), \mu^+(A_{2k})].$$
\end{enumerate}
\end{corollary}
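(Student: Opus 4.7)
The plan is to deduce the corollary as an immediate consequence of Theorem~\ref{thm:antiregular-interlacing} together with the explicit formulas for $\mu^-(A_m)$ and $\mu^+(A_m)$ in terms of specific ordered eigenvalues of $A_m$ that are listed in the preliminaries. The underlying observation is that Theorem~\ref{thm:antiregular-interlacing} already pinpoints which of the ordered eigenvalues of $G$ are the non-trivial ones and sandwiches them between certain eigenvalues of the maximal anti-regular induced subgraph of $G$; the corollary is then just a relabelling of those bounds using $\mu^\pm$.

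For part (i), where $s_1\geq 2$ and $A_{2k+1}$ is the maximal anti-regular induced subgraph of $G$, I would first note that Theorem~\ref{thm:antiregular-interlacing}(i) identifies the non-trivial negative eigenvalues of $G$ as precisely $\lambda_1(G)\leq\cdots\leq \lambda_k(G)$, with the largest of them satisfying $\lambda_k(G)\leq \lambda_k(A_{2k+1})$. Combining this inequality with the identity $\mu^-(A_{2k+1})=\lambda_k(A_{2k+1})$ from the preliminaries yields that every non-trivial negative eigenvalue of $G$ is at most $\mu^-(A_{2k+1})$. The mirror argument on the positive side of Theorem~\ref{thm:antiregular-interlacing}(i), together with $\mu^+(A_{2k+1})=\lambda_{k+2}(A_{2k+1})$, shows that every non-trivial positive eigenvalue of $G$ is at least $\mu^+(A_{2k+1})$. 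Hence no non-trivial eigenvalue of $G$ lies strictly between $\mu^-(A_{2k+1})$ and $\mu^+(A_{2k+1})$, giving (i).

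Part (ii) would follow by the identical recipe, applied with Theorem~\ref{thm:antiregular-interlacing}(ii) and with the pair of identities $\mu^-(A_{2k})=\lambda_{k-1}(A_{2k})$ and $\mu^+(A_{2k})=\lambda_{k+1}(A_{2k})$ in place of their odd-index counterparts; one should also keep in mind that in this case $G$ has only $k-1$ non-trivial negative eigenvalues rather than $k$. I do not expect any genuine obstacle here: the substantive content was already carried out in Theorem~\ref{thm:antiregular-interlacing}, and the only care needed is aligning the index conventions so that $\lambda_k(A_{2k+1})$ and $\lambda_{k-1}(A_{2k})$ really are the largest eigenvalues less than $-1$ of their respective anti-regular graphs, and symmetrically for the positive side. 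Once those identifications are in hand, the corollary reduces to a one-line invocation of the interlacing bounds from the previous theorem.
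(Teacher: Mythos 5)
Your proposal is correct and matches the paper exactly: the paper offers no separate proof, stating only that the corollary is ``a direct consequence of Theorem~\ref{thm:antiregular-interlacing},'' and your index bookkeeping ($\lambda_k(G)\leq\lambda_k(A_{2k+1})=\mu^-(A_{2k+1})$ on the negative side, $\mu^+(A_{2k+1})=\lambda_{k+2}(A_{2k+1})\leq\lambda_{n-k+1}(G)$ on the positive side, with the shifted indices $\lambda_{k-1}$ and $\lambda_{k+1}$ for the $s_1=1$ case) is precisely the intended argument. One caveat, which lies in the paper's statement rather than in your reasoning: the argument only excludes non-trivial eigenvalues from the \emph{open} interval $(\mu^-(A_m),\mu^+(A_m))$ --- the endpoints are attained when $G=A_m$ itself --- but, as you note, this is all that is needed, and in particular it still yields Corollary~\ref{cor:free-interval} since $\Omega$ sits strictly inside that open interval.
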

In \cite{CA-EP-JL-BS:18} it is proved that $\Omega = [\frac{-1-\sqrt{2}}{2},\frac{-1+\sqrt{2}}{2}]$ does not contain non-trivial eigenvalues of any anti-regular graph $A_n$ for $n\geq 2$, that is, $\Omega \subsetneq [\mu^-(A_n),\mu^+(A_n)]$.  We may therefore conclude the following. 
\begin{corollary}\label{cor:free-interval}
The interval $\Omega = [\frac{-1-\sqrt{2}}{2},\frac{-1+\sqrt{2}}{2}]$ does not contain any non-trivial eigenvalues of any threshold graph.
\end{corollary}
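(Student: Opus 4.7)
The plan is to derive this as a direct consequence of Corollary~\ref{cor:omega-refined} together with the anti-regular eigenvalue-free interval established in \cite{CA-EP-JL-BS:18}. First I would reduce to the connected case. Any threshold graph is the disjoint union of a connected threshold graph $G'$ (possibly empty) and some number of isolated vertices, since threshold graphs forbid $2K_2$ as an induced subgraph. The adjacency matrix is block diagonal, so the spectrum of the full graph is obtained from that of $G'$ by adjoining additional zeros. Thus the non-trivial eigenvalues of the full graph coincide with those of $G'$, and it suffices to prove the statement for $G'$.

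Next I would write the binary string of the connected threshold graph $G'$ as $b=0^{s_1}1^{t_1}\cdots 0^{s_k}1^{t_k}$, noting that connectedness forces $t_k\geq 1$, hence $k\geq 1$. I would then split into the two cases distinguished in Corollary~\ref{cor:omega-refined}: if $s_1\geq 2$ then $G'$ has no non-trivial eigenvalues in $[\mu^-(A_{2k+1}),\mu^+(A_{2k+1})]$, while if $s_1=1$ then $G'$ has no non-trivial eigenvalues in $[\mu^-(A_{2k}),\mu^+(A_{2k})]$. In either case, because $k\geq 1$ the relevant anti-regular graph $A_m$ has $m\geq 2$ and is well defined.

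Finally I would invoke the result of \cite{CA-EP-JL-BS:18} recalled just before the statement, namely $\Omega \subsetneq [\mu^-(A_m),\mu^+(A_m)]$ for every $m\geq 2$. Combining this containment with the previous step shows that $\Omega$ is a subinterval of a region already known to be free of non-trivial eigenvalues of $G'$, completing the argument. There is no genuine obstacle here; the substantive work has already been packaged into Theorem~\ref{thm:antiregular-subgraph} and Theorem~\ref{thm:antiregular-interlacing}, and the corollary is essentially a one-line deduction once those are in hand. The only bookkeeping is to verify that the very small threshold graphs (for instance $K_2$, with spectrum $\{-1,1\}$) present no issue, which is immediate since $\pm 1 \notin \Omega$ as non-trivial eigenvalues.
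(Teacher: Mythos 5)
Your proposal is correct and follows essentially the same route as the paper: Corollary~\ref{cor:omega-refined} combined with the containment $\Omega \subsetneq [\mu^-(A_m),\mu^+(A_m)]$ from \cite{CA-EP-JL-BS:18} gives the result immediately. Your explicit reduction to the connected case (stripping isolated vertices, which only contribute trivial zero eigenvalues) is a small piece of bookkeeping the paper leaves implicit, but it does not change the argument.
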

Using a similar eigenvalue interlacing technique and an induction argument, Corollary 4.2 was first proved by E. Ghorbani \cite{EG:19}.  It is clear, however, that Corollary~\ref{cor:omega-refined} gives a more refined estimate for an eigenvalue-free interval for any given threshold graph $G$ in terms of the largest anti-regular subgraph of $G$.

\begin{remark}
Corollary~\ref{cor:free-interval} can also be proved using Theorem~\ref{thm:antiregular-supergraph} and the corresponding analog of Theorem~\ref{thm:antiregular-interlacing}.
\end{remark}

The following conjecture was made in \cite{CA-EP-JL-BS:18}.
\begin{conjecture}\label{conj:An-optimal}
For each $n$, the anti-regular graph $A_n$ has the smallest positive eigenvalue and has the largest negative eigenvalue less than $-1$ among all threshold graphs on $n$ vertices.
\end{conjecture}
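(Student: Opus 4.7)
The plan is to combine Theorem~\ref{thm:antiregular-subgraph}, the anti-regular interlacing of Theorem~\ref{thm:antiregular-interlacing}, the Parity Principle (Lemma~\ref{thm:parity-principle}), and Proposition~\ref{prop:odd-even} in order to reduce the conjecture to a direct comparison of non-trivial extremal eigenvalues between two anti-regular graphs. Let $G$ be a connected threshold graph on $n$ vertices with binary string $b=0^{s_1}1^{t_1}\cdots 0^{s_k}1^{t_k}$ and let $A_m$ be its largest anti-regular induced subgraph, where $m=2k$ if $s_1=1$ and $m=2k+1$ if $s_1\geq 2$. From Theorem~\ref{thm:antiregular-interlacing} one immediately obtains $\mu^-(G)\leq \mu^-(A_m)$ and $\mu^+(A_m)\leq \mu^+(G)$, so it suffices to establish the purely anti-regular inequalities $\mu^-(A_m)\leq \mu^-(A_n)$ and $\mu^+(A_n)\leq \mu^+(A_m)$.

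I would then split into four cases according to the parity of $n$ and the value of $s_1$ (which fixes the parity of $m$). When $m$ and $n$ have the same parity, both comparisons follow at once from the within-parity monotonicity provided by the Parity Principle. When the parities differ, I would interpose $A_{m\pm 1}$: in the case $n$ even, $s_1\geq 2$ the chain $\mu^+(A_n)\leq \mu^+(A_{2k+2})\leq \mu^+(A_{2k+1})$ closes thanks to even-index monotonicity together with Proposition~\ref{prop:odd-even}, and a symmetric chain resolves $\mu^-$ in the case $n$ odd, $s_1=1$. This yields a complete proof of the conjecture in these cases.

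The remaining two configurations are genuinely critical. For $\mu^+$ when $n$ is odd with $s_1=1$, the only available interposition step $\mu^+(A_{n-1})\leq \mu^+(A_n)$ (from the remark following Proposition~\ref{prop:odd-even}) points the wrong way, and symmetrically for $\mu^-$ when $n$ is even with $s_1\geq 2$. A small example illustrates the difficulty: for $G=G(01011)$ one computes $\mu^+(G)\approx 0.355$ and $\mu^+(A_5)\approx 0.340$, so the conjecture holds, yet $\mu^+(A_m)=\mu^+(A_4)\approx 0.310$ lies strictly below $\mu^+(A_5)$, so interlacing through $A_m$ alone is inherently too weak. To attack these cases I would try, in increasing order of difficulty: (i) combining Theorem~\ref{thm:antiregular-subgraph} with Theorem~\ref{thm:antiregular-supergraph} so as to interlace $G$ simultaneously from below and from above inside two anti-regular graphs of opposite parity; (ii) enlarging $A_m$ inside $G$ by one carefully chosen duplicate or co-duplicate vertex from some $U_i$ or $V_i$ to produce an induced threshold subgraph of matching parity whose spectrum can still be compared with that of $A_n$; and (iii) an induction on the blocks $0^{s_i}1^{t_i}$ that tracks the extremal non-trivial eigenvalue through the characteristic-polynomial recursions from \cite{CA-EP-JL-BS:18} or \cite{EM:09}. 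I expect the main obstacle to be strategy (ii): controlling the direction in which the smallest positive (respectively largest below $-1$) eigenvalue moves when a single vertex is added to an anti-regular graph appears to require genuinely new inequalities beyond the simple interlacing that handles the other cases.
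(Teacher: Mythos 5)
First, be aware that the statement you are proving is labelled a \emph{conjecture} in the paper: the authors do not prove it, and their own contribution (Theorems~\ref{thm:even-opt} and~\ref{thm:odd-opt}) is exactly the kind of partial result you obtain. Your reduction --- interlace $G$ against its largest anti-regular induced subgraph $A_m$ via Theorem~\ref{thm:antiregular-interlacing} to get $\mu^-(G)\leq\mu^-(A_m)$ and $\mu^+(A_m)\leq\mu^+(G)$, then compare $\mu^{\pm}(A_m)$ with $\mu^{\pm}(A_n)$ using the Parity Principle and Proposition~\ref{prop:odd-even} --- is precisely the paper's strategy, and your identification of the obstruction (the cross-parity comparison points the wrong way for $\mu^+$ when $n$ is odd with $s_1=1$, and for $\mu^-$ when $n$ is even with $s_1\geq 2$) agrees with the paper's own discussion of the ``critical'' graphs following Theorem~\ref{thm:even-opt}. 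Your example $G(01011)$ is exactly the right kind of evidence that interlacing through $A_m$ alone cannot close these cases.

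The genuine gap is that the conjecture remains unproved in the critical configurations, and your strategies (i)--(iii) are proposals rather than arguments: (i) interlacing from above inside $A_{2n-2}$ (Theorem~\ref{thm:antiregular-supergraph}) can only bound $\mu^+(G)$ from below by $\mu^+(A_{2n-2})$, which by the Parity Principle is \emph{smaller} than $\mu^+(A_n)$, so it fails for the same reason; (ii) and (iii) are not carried out. To be fair, the paper does not close these cases either. One further point of comparison: the paper shrinks the unresolved set to only the $n-2$ ``almost anti-regular'' graphs with $s_1\geq 2$ and $n=2k+2$ (respectively $s_1=1$ and $n=2k+1$) by a strong induction that passes through an intermediate subgraph $\tilde G$ of order $2k+2$ (respectively $2k+1$), whereas you leave the entire mixed-parity configuration open. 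Note, however, that the paper's induction applies its inductive hypothesis to a subgraph $\tilde G$ that is itself one of the critical graphs excluded from the statement being proved, so your more conservative delineation of what is actually established is arguably the defensible one. In short: your proposal reproduces the paper's partial results by essentially the paper's method, but neither you nor the paper proves the conjecture, and any review should present your contribution as a partial result in the style of Theorems~\ref{thm:even-opt} and~\ref{thm:odd-opt}, not as a proof.
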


As a final application of Theorem~\ref{thm:antiregular-subgraph}, we are able to prove that Conjecture~\ref{conj:An-optimal} is true for all threshold graphs on $n$ vertices except for $n-2$ critical cases where the interlacing method fails; roughly speaking, the critical graphs are \textit{almost} anti-regular.
\begin{theorem}\label{thm:even-opt}
Assume that $n\geq 2$ is even.  
\begin{enumerate}[(i)]
\item Then $\mu^+(A_n) \leq \mu^+(G)$ for every threshold graph $G$ on $n$ vertices.
\item Then $\mu^-(G)\leq\mu^-(A_n)$ for every threshold graph $G$ on $n$ vertices with binary string $b=0^{s_1}1^{t_1}\cdots 0^{s_k}1^{t_k}$ with $s_1=1$ .
\item Then $\mu^-(G)\leq \mu^-(A_n)$ for every threshold graph $G$ on $n$ vertices with binary string $b=0^{s_1}1^{t_1}\cdots 0^{s_k}1^{t_k}$ with $s_1\geq 2$ and $2k+2<n$.
\end{enumerate}
\end{theorem}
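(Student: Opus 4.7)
The plan is to combine Corollary~\ref{cor:omega-refined} (which bounds $\mu^\pm(G)$ in terms of $\mu^\pm$ of the largest induced anti-regular subgraph of $G$) with the Parity Principle (Lemma~\ref{thm:parity-principle}) and Proposition~\ref{prop:odd-even}, the latter controlling a single-step transition across parities.

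For part (i), Corollary~\ref{cor:omega-refined} supplies $\mu^+(A_m) \le \mu^+(G)$ with $m = 2k$ if $s_1 = 1$ and $m = 2k+1$ if $s_1 \ge 2$. In the first case, the even $\mu^+$ subsequence is strictly decreasing and $2k \le n$, so the Parity Principle gives $\mu^+(A_n) \le \mu^+(A_{2k})$. In the second case, $n$ even with $s_1 \ge 2$ forces $n-1 \ge 2k+1$ odd; Proposition~\ref{prop:odd-even} gives $\mu^+(A_n) \le \mu^+(A_{n-1})$, and the Parity Principle on the odd subsequence gives $\mu^+(A_{n-1}) \le \mu^+(A_{2k+1})$. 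Part (ii) is the direct $\mu^-$ analog for $s_1 = 1$: Corollary yields $\mu^-(G) \le \mu^-(A_{2k})$, and the strictly increasing even $\mu^-$ subsequence with $2k \le n$ yields $\mu^-(A_{2k}) \le \mu^-(A_n)$.

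Part (iii) is the subtle case. Corollary~\ref{cor:omega-refined} gives $\mu^-(G) \le \mu^-(A_{2k+1})$, so it remains to prove $\mu^-(A_{2k+1}) \le \mu^-(A_n)$ for even $n \ge 2k+4$. The central obstacle is the parity mismatch: $A_{2k+1}$ lies in the odd $\mu^-$ subsequence and $A_n$ in the even one, so the Parity Principle does not chain them directly. The available qualitative inequalities $\mu^-(A_{2k+2}) \le \mu^-(A_{2k+1})$ from Proposition~\ref{prop:odd-even}, $\mu^-(A_{2k+2}) \le \mu^-(A_n)$ from the Parity Principle, and $\mu^-(A_n) \le \mu^-(A_{n-1})$ again from Proposition~\ref{prop:odd-even} form a cross that leaves $\mu^-(A_{2k+1})$ and $\mu^-(A_n)$ incomparable. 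Moreover, the naive interlacing applied to $A_{2k+1} \subset A_n$ only yields $\mu^-(A_{2k+1}) = \lambda_k(A_{2k+1}) \le \lambda_{n-k-1}(A_n)$, and since $n-k-1 > n/2-1$ whenever $n > 2k$, this upper bound lies above $\mu^-(A_n) = \lambda_{n/2-1}(A_n)$ and is therefore too weak.

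My plan to overcome this is to exploit the genuine slack $k \le n/2 - 2$ afforded by the hypothesis $n \ge 2k+4$ by stepping through the tower of induced subgraphs $A_{2k+1} \subset A_{2k+3} \subset \cdots \subset A_{n-1} \subset A_n$ and combining the resulting interlacings with the strict inequalities of the Parity Principle along the odd subsequence. The expected mechanism is that the cumulative strict gains $\mu^-(A_{2k+1}) < \mu^-(A_{2k+3}) < \cdots < \mu^-(A_{n-1})$ must dominate the single odd-to-even drop $\mu^-(A_{n-1}) - \mu^-(A_n)$ controlled by Proposition~\ref{prop:odd-even}, and the hypothesis $n - 2k - 2 \ge 2$ is precisely what guarantees at least two such strict gains are available. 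Converting this qualitative picture into the quantitative inequality $\mu^-(A_{2k+1}) \le \mu^-(A_n)$ is the main technical hurdle I anticipate, and it may require either an explicit handle on $\mu^-(A_m)$ via the formulas in \cite{CA-EP-JL-BS:18} and the recurrences in \cite{EM:09}, or a refined eigenvalue-counting argument; at $n = 2k+2$ this slack disappears entirely, which is exactly what produces the $n-2$ critical cases excluded from the statement.
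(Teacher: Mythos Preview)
Your treatment of parts (i) and (ii) is correct and matches the paper's argument in spirit; the paper handles (i) slightly more economically by always interlacing with $A_{2k}$, which is an induced subgraph of $G$ regardless of whether $s_1=1$ or $s_1\geq 2$, thereby avoiding your case split and the extra appeal to Proposition~\ref{prop:odd-even}.

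For part (iii) your proposal has a genuine gap. You correctly reduce, via Corollary~\ref{cor:omega-refined}, to the inequality $\mu^-(A_{2k+1}) \le \mu^-(A_n)$ for even $n \ge 2k+4$, and you correctly diagnose that neither the Parity Principle, nor Proposition~\ref{prop:odd-even}, nor naive interlacing of $A_{2k+1}\subset A_n$ delivers this. But your proposed ``mechanism''---that the cumulative strict increases along the odd subsequence should dominate the single odd-to-even drop---is purely heuristic: none of the cited results quantifies those gaps, so no chaining of the available qualitative inequalities can close the argument. You concede as much when you defer to possible explicit formulas from \cite{CA-EP-JL-BS:18,EM:09}; at that point no proof of (iii) has actually been given.

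The paper sidesteps the parity mismatch by a different device: instead of comparing $G$ only to anti-regular graphs, it interpolates through a \emph{non-anti-regular} threshold subgraph $\tilde G\subset G$ of \emph{even} order $2k+2$, with the same block count $k$ and $\tilde s_1\ge 2$ (such a $\tilde G$ exists precisely because $2k+2<n$). Interlacing gives $\mu^-(G)=\lambda_k(G)\le\lambda_k(\tilde G)=\mu^-(\tilde G)$, and the paper then invokes a strong-induction hypothesis to obtain $\mu^-(\tilde G)\le\mu^-(A_{2k+2})$; since $2k+2$ and $n$ are both even, the Parity Principle finishes with $\mu^-(A_{2k+2})\le\mu^-(A_n)$. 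The key idea you are missing is that by passing through an intermediate threshold subgraph of even order one never leaves the even-parity subsequence, so the odd-to-even comparison you were stuck on is never needed. (You may wish to scrutinize the paper's induction step carefully: $\tilde G$ has $2\tilde k+2=2k+2=|\tilde G|$, so it is itself one of the critical graphs excluded from the hypothesis of (iii), and the appeal to the induction hypothesis for $\tilde G$ deserves a closer look.)
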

\begin{proof}
(i) Assume that $G$ has binary string $b=0^{s_1}1^{t_1}\cdots 0^{s_k}1^{t_k}$ and thus $2k\leq n$.  Then Theorem~\ref{thm:antiregular-interlacing} implies that $\mu^+(G) = \lambda_{n-k+1}(G)$.  Now $A_{2k}$ is an induced subgraph of $G$ and thus by interlacing we have 
\[
\mu^+(A_{2k}) = \lambda_{k+1}(A_{2k}) \leq \lambda_{n-2k+(k+1)}(G) = \mu^+(G).
\]
Since $n\geq 2k$, then by the Parity Principle (Lemma~\ref{thm:parity-principle}) we have $\mu^+(A_n)\leq \mu^+(A_{2k})$ and thus $\mu^+(A_n) \leq \mu^+(G)$ as claimed.

(ii) Now suppose that $s_1=1$, and therefore $\mu^-(G) = \lambda_{k-1}(G)$.  Since $A_{2k}$ is a subgraph of $G$, then by interlacing we have
\[
\mu^-(G) = \lambda_{k-1}(G) \leq \lambda_{k-1}(A_{2k}) = \mu^-(A_{2k}) \leq \mu^-(A_n)
\]
where the last inequality holds by the Parity Principle since $n$ is even.

(iii) If $s_1\geq 2$, then $\mu^-(G) = \lambda_k(G)$.  The proof is by strong induction.  The case $n=2$ is trivial.  Assume that the claim holds for all threshold graphs with less than $n$ vertices.  Since $2k+2<n$, there exists a threshold subgraph $\tilde{G}$ of $G$ with binary string $\tilde{b}=0^{\tilde{s}_1}1^{\tilde{t}_1}\cdots 0^{\tilde{s}_k}1^{\tilde{t}_k}$ with $\tilde{s}_1 \geq 2$ and $|\tilde{G}| = 2k+2$.  Thus $\mu^-(\tilde{G}) = \lambda_k(\tilde{G})$ and by induction $\mu^-(\tilde{G}) \leq \mu^-(A_{2k+2})$.  Then by interlacing and the induction hypothesis we have
\[
\mu^-(G) = \lambda_k(G) \leq \lambda_k(\tilde{G}) \leq \mu^-(A_{2k+2}).
\]
Since $n$ is even, then the Parity Principle implies that $\mu^-(A_{2k+2}) \leq \mu^-(A_n)$ and thus $\mu^-(G) \leq \mu^-(A_n)$.
\end{proof}
The threshold graphs for which the method of proof in Theorem~\ref{thm:even-opt}(iii) fails to apply have binary string $b=0^{s_1}1^{t_1}\cdots 0^{s_k}1^{t_k}$ such that $s_1\geq 2$ and $n=2k+2$.  It follows that either $s_1=2$ and exactly one of $s_2,\ldots,s_k, t_1, \ldots,t_k$ is also equal to two and all others are one, or $s_1=3$ and all other $s_i=t_i=1$.  Hence, there are only $2k = n-2$ threshold graphs not covered by Theorem~\ref{thm:even-opt}(iii).  Note that these graphs are almost anti-regular and contain $A_{2k+1}$ as an induced subgraph.  For instance, if $n=8$ the graphs are
\begin{gather*}
0^21^20101, 0^210^2101, 0^2101^201,  0^21010^21, 0^210101^2, 0^310101.
\end{gather*}
Since $\mu^{-}(A_{2k+2}) \leq \mu^-(A_{2k+1})$ (Proposition~\ref{prop:odd-even}) the method of proof in Theorem~\ref{thm:even-opt}(iii) will not yield $\mu^-(G)\leq \mu^-(A_n)$ for these critical graphs $G$.  We now treat the case $n$ odd.

\begin{theorem}\label{thm:odd-opt}
Assume $n\geq 3$ is odd.  
\begin{enumerate}[(i)]
\item Then $\mu^-(G) \leq \mu^-(A_n)$ for all threshold graphs $G$ on $n$ vertices.
\item Then $\mu^+(G) \leq \mu^+(A_n)$ for all threshold graphs $G$ on $n$ vertices with binary string $b=0^{s_1}1^{t_1}\cdots 0^{s_k}1^{t_k}$ with $s_1\geq 2$ .
\item Then $\mu^+(G) \leq \mu^+(A_n)$ for all threshold graphs $G$ on $n$ vertices with binary string $b=0^{s_1}1^{t_1}\cdots 0^{s_k}1^{t_k}$ with $s_1 = 1$ and $2k+1<n$.
\end{enumerate}
\end{theorem}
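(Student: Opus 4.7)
I first observe that parts (ii) and (iii) as typeset have the inequality reversed relative to both the stated conjecture and Theorem~\ref{thm:even-opt}(i): the graph $G=K_5$ satisfies the hypotheses of (iii) (taking $s_1=1$, $k=1$, so $2k+1=3<5$) yet $\mu^+(K_5)=4>\mu^+(A_5)$. The plan is therefore to prove the intended inequalities $\mu^+(A_n)\le\mu^+(G)$ for parts (ii) and (iii), mirroring Theorem~\ref{thm:even-opt}(i).

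For part (i), Theorem~\ref{thm:antiregular-subgraph} produces $A_m$ as an induced subgraph of $G$, with $m=2k$ if $s_1=1$ and $m=2k+1$ if $s_1\ge 2$. In either case Theorem~\ref{thm:antiregular-interlacing} identifies $\mu^-(G)$ and $\mu^-(A_m)$ at the same spectral index ($\lambda_{k-1}$ when $m=2k$, $\lambda_k$ when $m=2k+1$), so eigenvalue interlacing immediately gives $\mu^-(G)\le\mu^-(A_m)$. Since $n$ is odd with $n\ge 2k+1$, the chain $\mu^-(A_{2k})\le\mu^-(A_{2k+1})\le\mu^-(A_n)$ (the first step from the remark following Proposition~\ref{prop:odd-even}, the second from the Parity Principle on the odd subsequence) yields $\mu^-(A_m)\le\mu^-(A_n)$. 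For part (ii) in the intended direction: Theorem~\ref{thm:antiregular-subgraph} gives $A_{2k+1}$ induced in $G$, interlacing with $i=k+2$ and $m=2k+1$ gives $\mu^+(A_{2k+1})=\lambda_{k+2}(A_{2k+1})\le\lambda_{n-k+1}(G)=\mu^+(G)$, and the Parity Principle on the odd subsequence gives $\mu^+(A_n)\le\mu^+(A_{2k+1})$.

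Part (iii) in the intended direction calls for strong induction on $n$, directly paralleling Theorem~\ref{thm:even-opt}(iii). For the inductive step when $n>2k+3$, one deletes two vertices of $G$ from blocks $U_i$ ($i\ge 2$) or $V_j$ large enough to absorb the removal (some block has size at least two since $n-2k\ge 5$), arranged so that the resulting induced threshold subgraph $\tilde G$ satisfies $\tilde s_1=1$, $\tilde k=k$, and $|\tilde G|=n-2\ge 2k+3$. The inductive hypothesis gives $\mu^+(A_{n-2})\le\mu^+(\tilde G)$; interlacing on the index $n-k-1$ yields $\mu^+(\tilde G)=\lambda_{n-k-1}(\tilde G)\le\lambda_{n-k+1}(G)=\mu^+(G)$; and the Parity Principle gives $\mu^+(A_n)\le\mu^+(A_{n-2})$, closing the chain.

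The principal obstacle is the boundary case $n=2k+3$, where any reduction $\tilde G$ with $\tilde s_1=1$ and $\tilde k=k$ has $|\tilde G|=2k+1$ and so falls outside the hypothesis of (iii). My planned resolution splits on structure: if some $s_i\ge 2$ with $i\ge 2$, then $G$ contains $A_{2k+1}$ as an induced subgraph, and since $n-k+1=k+4$ aligns with the expected index, the interlacing argument of (ii) applies verbatim to give $\mu^+(A_{2k+1})\le\mu^+(G)$, after which the Parity Principle closes the chain; otherwise $s_1=s_2=\cdots=s_k=1$, so $G$ is a clique-blowup of $A_{2k}$, and the expected route is a direct comparison between the $(2k-1)\times(2k-1)$ quotient matrix of the degree partition of $G$ and the analogous quotient of $A_{2k+3}$. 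This latter structural comparison is the main technical hurdle, precisely analogous to the $n-2=2k$ critical cases left uncovered by Theorem~\ref{thm:even-opt}(iii).
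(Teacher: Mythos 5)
You read the statement correctly: the typeset inequalities in (ii) and (iii) are reversed, and the paper's own proof in fact establishes $\mu^+(A_n)\leq\mu^+(G)$, consistent with Conjecture~\ref{conj:An-optimal} and with Theorem~\ref{thm:even-opt}(i); your $K_5$ counterexample to the typeset direction is valid. Your proofs of (i) and (ii) coincide with the paper's essentially step for step: the same index bookkeeping via Theorem~\ref{thm:antiregular-interlacing} ($\mu^-(G)=\lambda_{k-1}(G)$ if $s_1=1$ and $\lambda_k(G)$ if $s_1\geq 2$; $\mu^+(G)=\lambda_{n-k+1}(G)$), the same interlacing against $A_{2k}$ or $A_{2k+1}$ from Theorem~\ref{thm:antiregular-subgraph}, and the same closing chain via Proposition~\ref{prop:odd-even} and the Parity Principle.

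The genuine gap is in your part (iii), at the boundary family $n=2k+3$. Your first subcase asserts that if some $s_i\geq 2$ with $i\geq 2$ then $G$ contains $A_{2k+1}$ as an induced subgraph; this is false, and it contradicts the maximality assertion of Theorem~\ref{thm:antiregular-subgraph}: when $s_1=1$ the largest induced anti-regular subgraph is $A_{2k}$, no matter how large $s_2,\ldots,s_k$ are. Concretely, to realize the string $00101\cdots 01$ of $A_{2k+1}$ as a substring of $b$, the $k$ selected ones must come one from each of $V_1,\ldots,V_k$, which forces both leading zeros to come from $U_1$; but $|U_1|=s_1=1$. So the interlacing argument of (ii) has no premise to run on, and interlacing against $A_{2k}$ only yields $\mu^+(A_{2k})\leq\mu^+(G)$, which does not close the chain because $\mu^+(A_{2k})\leq\mu^+(A_{2k+1})$ (Proposition~\ref{prop:odd-even}). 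Your second subcase (all $s_i=1$) is explicitly left open, so your induction is missing its entire base family. For comparison: the paper does not descend by two as you do; it reduces in one step to a subgraph $\tilde{G}$ of order exactly $2k+1$ with $\tilde{s}_1=1$ and $\tilde{k}=k$ and invokes the induction hypothesis to get $\mu^+(A_{2k+1})\leq\mu^+(\tilde{G})$. But any such $\tilde{G}$ has exactly one block of size two and is therefore one of the critical graphs with $2\tilde{k}+1=|\tilde{G}|$ excluded by hypothesis (iii) (the paper itself lists these as uncovered immediately after the proof), so the induction hypothesis does not literally apply to $\tilde{G}$. In other words, the obstruction you isolated at $n=2k+3$ is the very one the paper's one-step reduction passes through without comment; your descent is sound for $n>2k+3$ and is in that respect more careful than the paper's, but like the paper you have not disposed of the critical family, and your attempted resolution of half of it rests on a false subgraph claim.
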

\begin{proof}
(i) Let $G$ have binary string $b=0^{s_1}1^{t_1}\cdots 0^{s_k}1^{t_k}$.  Suppose that $s_1=1$ and thus $\mu^-(G) = \lambda_{k-1}(G)$.  Since $A_{2k}$ is an induced subgraph of $G$ then by interlacing
\[
\lambda_{k-1}(G) \leq \lambda_{k-1}(A_{2k}) = \mu^-(A_{2k}) \leq \mu^-(A_n)
\]
where the last inequality follows since from even to odd the negative eigenvalue increases (Proposition~\ref{prop:odd-even}).  If on the other hand $s_2\geq 2$, then $\mu^-(G) = \lambda_k(G)$.  In this case, $A_{2k+1}$ is an induced subgraph of $G$ and $\mu^-(A_{2k+1}) = \lambda_k(A_{2k+1})$.  Then
\[
\lambda_k(G) \leq \lambda_k(A_{2k+1}) \leq \mu^-(A_n)
\]
since $n$ is odd and thus $\mu^-(G) \leq \mu^-(A_n)$.

(ii) Suppose that $s_1 \geq 2$.  Then $A_{2k+1}$ is a subgraph of $G$.  Now $\mu^+(G) = \lambda_{n-k+1}(G)$ and $\mu^+(A_{2k+1}) = \lambda_{k+2}(A_{2k+1})$.  Therefore by interlacing
\[
\mu^+(A_n) \leq \mu^+(A_{2k+1}) = \lambda_{k+2}(A_{2k+1}) \leq \lambda_{n-(2k+1)+k+2}(G) = \lambda_{n-k+1}(G) = \mu^+(G)
\]
where the first inequality follows since $n$ is odd.

(iii) Suppose that $s_1=1$.  The proof is by strong induction.  The case $n=3$ is trivial.  Hence, assume that the claim holds for all threshold graphs with an odd number of vertices less than $n$.  Let $G$ be an arbitrary threshold graph on $n$ vertices with string $b=0^{s_1}1^{t_1}\cdots 0^{s_k}1^{t_k}$ with $s_1 = 1$ and $2k+1<n$.  Let $\tilde{G}$ be any threshold subgraph of $G$ of order  $2k+1$ and with binary string $\tilde{b}=0^{\tilde{s}_1}1^{\tilde{t}_1}\cdots 0^{\tilde{s}_k}1^{\tilde{t}_k}$ with $\tilde{s}_1=1$.  Then $\mu^+(\tilde{G}) = \lambda_{k+2}(\tilde{G})$ and by interlacing
\[
\mu^+(\tilde{G})=\lambda_{k+2}(\tilde{G}) \leq \lambda_{n-(2k+1)+k+2}(G) = \lambda_{n-k+1}(G) = \mu^+(G).
\]
Now since $2k+1<n$, by induction $\mu^+(A_{2k+1}) \leq \mu^+(\tilde{G}) $ and thus $\mu^+(A_{2k+1}) \leq \mu^+(G)$.  Since $n$ is odd then by the Parity Principle we have $\mu^+(A_n) \leq \mu^+(A_{2k+1})$ and thus $\mu^+(A_n)\leq\mu^+(G)$ as desired.
\end{proof}
In the case that $n$ is odd, the critical threshold graphs not covered by Theorem~\ref{thm:odd-opt}(iii) have binary string $b=0^{s_1}1^{t_1}\cdots 0^{s_k}1^{t_k}$ with $s_1=1$ and $2k+1=n$, and thus only one of $s_2,\ldots,s_k,t_1,\ldots,t_k$ equals two and all others equal one.  There are only $2k-1=n-2$ such threshold graphs.

\section{Threshold Eigenvalue Estimates}
In this section, we exploit the subgraph structure of threshold graphs to give eigenvalue estimates for the non-trivial eigenvalues and the maximum/minimum eigenvalues.

Let $G$ be a threshold graph with binary string $b=0^{s_1}1^{t_1}\cdots 0^{s_k}1^{t_k}$.  Let $s=\min s_i$, let $t=\min t_i$, let $\sigma=\max s_i$, and let $\tau = \max t_i$.  Let $b' = 0^{s}1^{t}\cdots 0^{s}1^{t}$ and let $b'' = 0^{\sigma} 1^{\tau} \cdots 0^{\sigma} 1^{\tau}$ where in $b'$ the string $0^s1^t$ is repeated $k$ times and similarly for $b''$.  Let $n'=k(s+t)=|G'|$ and let $n''=k(\sigma+\tau)=|G''|$.  Then $G'$ is an induced subgraph of $G$ and $G$ is an induced subgraph of  $G''$.  
\begin{example}
If $b=0^31^20^41^60^51^3$, then $b'=0^31^20^31^20^31^2$ and $b''=0^51^60^51^60^51^6$.  The graphs $G$, $G'$, and $G''$ are illustrated in Figure~\ref{fig:last-example}.
\begin{figure}[t]
\centering
\includegraphics[height=90mm,keepaspectratio,trim=1.5in 3in 1in 2in,clip,keepaspectratio]{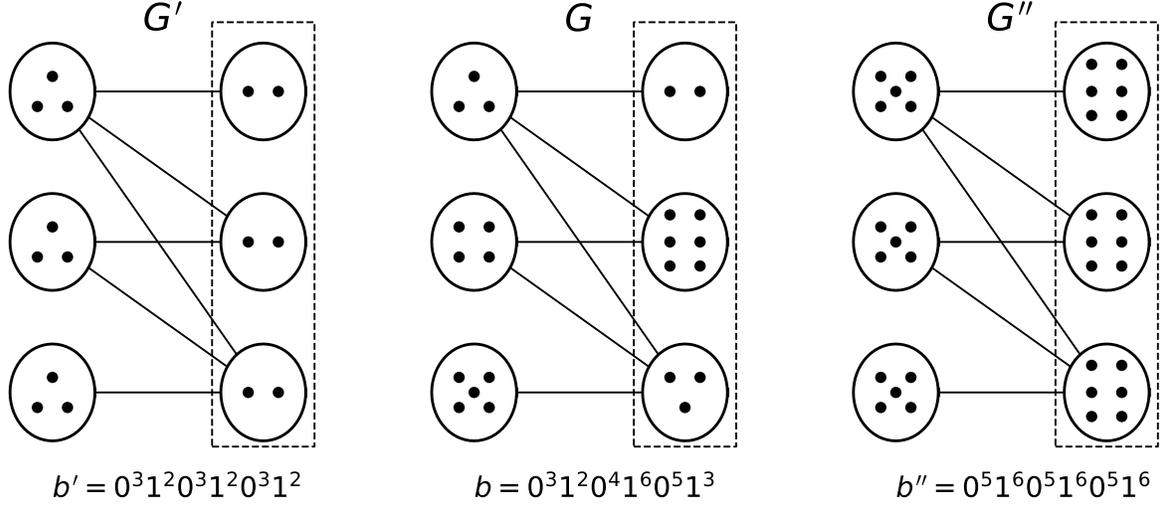}
\caption{Graphs from Example~\ref{exm:last-example}.}\label{fig:last-example}
\end{figure}  
\end{example}

A direct application of the eigenvalue interlacing theorem proves the following.
\begin{theorem}
With the above notation it holds that
\[
\lambda_i(G'') \leq \lambda_i(G) \leq \lambda_i(G'),\quad i=1,\ldots, k
\]
and
\[
\lambda_{n'-j}(G') \leq \lambda_{n-j}(G) \leq \lambda_{n''-j}(G''), \quad j=0,\ldots,k-1.
\]
\end{theorem}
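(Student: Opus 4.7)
The plan is to apply the Eigenvalue Interlacing Theorem twice, once to the induced inclusion $G' \subseteq G$ and once to the induced inclusion $G \subseteq G''$, then simply read off the four desired inequalities by choosing the right index in each case. Since the preceding discussion already establishes that $G'$ is an induced subgraph of $G$ on $n'$ vertices and that $G$ is an induced subgraph of $G''$ on $n$ vertices, no extra structural work is needed; the content of the theorem is purely a matter of index bookkeeping.

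First, I would record what interlacing gives in each case. Applying interlacing to $G' \subseteq G$ yields
\[
\lambda_i(G) \;\leq\; \lambda_i(G') \;\leq\; \lambda_{n-n'+i}(G), \quad i=1,\ldots,n',
\]
and applying it to $G \subseteq G''$ yields
\[
\lambda_i(G'') \;\leq\; \lambda_i(G) \;\leq\; \lambda_{n''-n+i}(G''), \quad i=1,\ldots,n.
\]

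Next I would extract the \emph{lower} halves of the advertised chains. The inequality $\lambda_i(G'') \leq \lambda_i(G)$ is already the left half of the first relation. For the right half, $\lambda_i(G) \leq \lambda_i(G')$ comes from the left side of the first interlacing display, and is valid for all $i = 1,\ldots,n'$, hence in particular for $i=1,\ldots,k$ since $k \leq n'$. This gives the first chain for $i=1,\ldots,k$.

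For the \emph{upper} half, I would set $i = n'-j$ in the right side of the first interlacing display to obtain $\lambda_{n'-j}(G') \leq \lambda_{n-j}(G)$, valid as long as $1 \leq n'-j \leq n'$, which holds for $j = 0,\ldots,k-1$ since $k \leq n'$. Similarly, setting $i = n-j$ in the right side of the second interlacing display yields $\lambda_{n-j}(G) \leq \lambda_{n''-j}(G'')$, valid for $j = 0,\ldots,n-1$ and hence in particular for $j=0,\ldots,k-1$. Concatenating these two inequalities produces the second chain. There is no real obstacle here; the only thing to watch is that the chosen indices stay within the ranges where interlacing applies, and this is guaranteed by $k \leq n' \leq n \leq n''$.
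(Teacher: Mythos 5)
Your proposal is correct and is exactly the argument the paper intends: the paper offers no written proof beyond the remark that the theorem follows by ``a direct application of the eigenvalue interlacing theorem,'' and your index bookkeeping (applying interlacing once to $G'\subseteq G$ and once to $G\subseteq G''$, then substituting $i=n'-j$ and $i=n-j$ for the upper chains) supplies precisely the missing details, with the range checks correctly justified by $k\leq n'\leq n\leq n''$.
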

We note that threshold graphs with binary sequence of the form $b=0^{s}1^{t}\cdots 0^{s}1^{t}$, that is, the independent sets $U_i$ all have the same number of vertices and similarly for the cliques $V_i$, share similar spectral properties as anti-regular graphs.

We end the paper with an estimate of the largest $\lambda_{\max}(G)$ and smallest $\lambda_{\min}(G)$ eigenvalues of a general threshold graph $G$.
\begin{theorem}\label{thm:max-min}
Let $G$ be a threshold graph with binary string $b=0^{s_1}1^{t_1}\ldots 0^{s_k}1^{t_k}$ and let $\sigma_i=\sum_{j=1}^i s_j$ and let $\tau_i=\sum_{j=i}^k t_j$ for each $i\in \{1,2,\ldots,k\}$.  Then
\[
\max_{1\leq i\leq k} \left\{ \frac{ (\tau_i - 1) + \sqrt{(\tau_i-1)^2 + 4\tau_i \sigma_i} }{2} \right\} \leq \lambda_{\textup{max}}(G)
\]
and
\[
\lambda_{\textup{min}}(G) \leq \min_{1\leq i\leq k} \left\{ \frac{ (\tau_i - 1) - \sqrt{(\tau_i-1)^2 + 4\tau_i \sigma_i} }{2} \right\}.
\]
\end{theorem}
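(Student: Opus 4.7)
The plan is to identify, for each index $i\in\{1,\ldots,k\}$, an induced subgraph $H_i$ of $G$ whose largest and smallest eigenvalues are precisely the two expressions appearing under the $\max$ and $\min$ in the statement, and then invoke eigenvalue interlacing. This is the same strategy used throughout Section~\ref{sec:main-results}, specialized now to a convenient two-block subgraph.

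First I would let $H_i$ be the subgraph of $G$ induced on the vertex set
\[
W_i \;=\; U_1\cup U_2\cup\cdots\cup U_i \;\cup\; V_i\cup V_{i+1}\cup\cdots\cup V_k .
\]
Using the global adjacency rule for threshold graphs recorded in Figure~\ref{fig:threshold-structure} (each $U_j$ is adjacent to $V_j\cup\cdots\cup V_k$, and $V_1\cup\cdots\cup V_k$ is a clique), one checks that within $W_i$ the set $U_1\cup\cdots\cup U_i$ is an independent set of size $\sigma_i$, the set $V_i\cup\cdots\cup V_k$ is a clique of size $\tau_i$, and every vertex of the first block is joined to every vertex of the second. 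Thus $H_i$ is the complete split graph obtained as the join of $\overline{K_{\sigma_i}}$ with $K_{\tau_i}$.

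Next I would compute the spectrum of $H_i$ using the equitable partition $\{U_1\cup\cdots\cup U_i,\; V_i\cup\cdots\cup V_k\}$. The vertices within each cell are mutually duplicate (in the independent block) or mutually co-duplicate (in the clique block), so $H_i$ has eigenvalues $0$ of multiplicity $\sigma_i-1$ and $-1$ of multiplicity $\tau_i-1$ from the remarks preceding Theorem~\ref{thm:antiregular-interlacing}. The remaining two eigenvalues are those of the $2\times 2$ quotient matrix
\[
B_i \;=\; \begin{pmatrix} 0 & \tau_i \\ \sigma_i & \tau_i - 1 \end{pmatrix},
\]
whose characteristic polynomial is $\lambda^2-(\tau_i-1)\lambda-\sigma_i\tau_i$, giving the two roots
\[
\lambda_{\pm} \;=\; \frac{(\tau_i-1)\pm\sqrt{(\tau_i-1)^2+4\sigma_i\tau_i}}{2}.
\]
A short verification shows $\lambda_-\leq -1$ (equivalent to $\sigma_i\geq 1$) and $\lambda_+>0$, so $\lambda_{\max}(H_i)=\lambda_+$ and $\lambda_{\min}(H_i)=\lambda_-$.

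Finally, since $H_i$ is an induced subgraph of $G$, eigenvalue interlacing (Theorem~1.1) yields $\lambda_{\max}(H_i)\leq \lambda_{\max}(G)$ and $\lambda_{\min}(G)\leq \lambda_{\min}(H_i)$ for every $i$; taking the maximum and minimum over $i\in\{1,\ldots,k\}$ delivers the two bounds. I do not expect any real obstacle: the only nontrivial step is recognizing the correct two-block induced subgraph $H_i$, after which the equitable partition computation is routine and the interlacing argument is immediate.
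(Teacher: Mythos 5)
Your proposal is correct and takes essentially the same approach as the paper: the induced subgraph $H_i$ you construct on $U_1\cup\cdots\cup U_i\cup V_i\cup\cdots\cup V_k$ is exactly the paper's $G_i=G(0^{\sigma_i}1^{\tau_i})$, whose two non-trivial eigenvalues the paper likewise extracts from the degree-partition quotient before applying interlacing. Your write-up merely supplies more detail (the explicit quotient matrix and the verification that $\lambda_-\leq -1<0<\lambda_+$), which the paper leaves implicit.
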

\begin{proof}
By construction, the threshold graph with binary string $0^{\sigma_i}1^{\tau_i}$, which we denote by $G_i$, is an induced subgraph of $G$.  Using the quotient graph associated to the degree partition of $G_i$ (see for instance \cite{DJ-VT-FT:13}), the only non-trivial eigenvalues of $G_i$ are
\begin{align*}
\lambda_{\min}(G_i) &= \frac{ (\tau_i - 1) - \sqrt{(\tau_i-1)^2 + 4\tau_i \sigma_i} }{2} < -1\\[2ex]
\intertext{and}
\lambda_{\max}(G_i) & = \frac{ (\tau_i - 1) + \sqrt{(\tau_i-1)^2 + 4\tau_i \sigma_i} }{2} > 0.
\end{align*}
The claim now holds by eigenvalue interlacing.
\end{proof}
We note that it was proved in \cite{DJ-VT-FT:13} that among all threshold graphs on $n$ vertices, the minimum eigenvalue is minimized by the graph $0^s1^t$ where $t=\lfloor\frac{n}{3}\rfloor$ and $s=n-t$.

\begin{example}\label{exm:last-example}
Let $G$ be the threshold graph with binary string $b=0^{s_1}1^{t_1}\cdots 0^{s_5}1^{t_5}$ where $s=(2,2,3,6,3)$ and $t=(6,9,1,2,4)$.  Below we tabulate $\lambda_{\min}(G_i)$ and $\lambda_{\max}(G_i)$ up to five decimal places for $i=1,2,\ldots,5$ and indicate the minimum and maximum.
\begin{table}[h]
\centering
\begin{tabular}{ccc}\hline
$\lambda_{\min}(G_i)$ && $\lambda_{\max}(G_i)$\\ \hline
$-1.91974$ && $\mathbf{22.91974}$\\
$-3.46586$ && $18.46586$\\
$-4.61577$ && $10.61577$\\
$\mathbf{-6.67878}$ && $11.67878$\\
$-6.63941$ && $9.63941$\\ \hline
\end{tabular}
\end{table}

\noindent Using numerical software we found that $\lambda_{\min}(G) \approx -7.95182$ and $\lambda_{\max}(G) \approx 24.59001$.
\end{example}

\section{Conclusion}
In this paper we demonstrated the important role played by the anti-regular graph in the spectral analysis of threshold graphs.  The widely studied class of \textit{cographs} contain the threshold graphs as a special case.  It would be interesting to know if within the class of cographs there is a distinguished graph (or more) that can be used to analyze the spectral properties of cographs.


\section{Acknowledgements}
The authors acknowledge the support of the National Science Foundation under Grant No. ECCS-1700578.

\baselineskip 1em

\end{document}